\newcommand{\Int}{\operatorname{Int}}
\newtheorem{theorem}{Theorem}[section]
\newtheorem{lemma}[theorem]{Lemma}
\theoremstyle{definition}
\newtheorem{definition}[theorem]{Definition}
\newtheorem{example}[theorem]{Example}
\theoremstyle{remark}
\newtheorem{remark}[theorem]{Remark}
\definecolor{darkgreen}{cmyk}{1,0,1,.2}
\definecolor{m}{rgb}{1,0.1,1}
\newdimen\theight
\def\TeXref#1{%
             \leavevmode\vadjust{\setbox0=\hbox{{\tt
                     \quad\quad  {\small \textrm #1}}}%
             \theight=\ht0
             \advance\theight by \lineskip
             \kern -\theight \vbox to
             \theight{\rightline{\rlap{\box0}}%
             \vss}%
             }}%
\begin{document}

 \title{Integration with respect to the Lefschetz number}
 \thanks{This research was partially supported by TEORÍA DE MORSE, TOPOLOGÍA, ANÁLISIS Y DINÁMICA - GENERACIÓN DE CONOCIMIENTO 2020 (PID2020-114474GB-I00).
}

\author[A. Majadas-Moure \and D. Mosquera
     ]{%
	Alejandro O. Majadas-Moure \and David Mosquera-Lois
}
              
\address{
		 Alejandro O. Majadas-Moure \\
		 Departamento de Matemáticas, Universidade de Santiago de Compostela, SPAIN}
		 \email{alejandroomar.majadas@rai.usc.es}
		
 \address{
           	 David Mosquera-Lois \\
              Departamento de Matemáticas, Universidade de Santiago de Compostela, SPAIN}
   \address{ David Mosquera-Lois \\ Departamento de Matemáticas, Universidade de Vigo, SPAIN }\email{david.mosquera.lois@usc.es}
		

\begin{abstract} 
We develop a theory of integration with respect to the Lefschetz number in the context of o-minimal structures containing the semilinear sets. We prove several results and we apply the theory to the field of object detention using sensors.
\end{abstract}



\maketitle

\section{Introduction}
The Lefschetz number of a continuous map between compact polyhedra $f\colon X\to X$ is a classical topological invariant defined as
\begin{equation}\label{eq:def_lefschetz}
    \Lambda(f)=\sum_{i \ge 0} (-1)^i \mathrm{trace}(H_i(f): H_i(X) \to H_i(X)),
\end{equation} where $H_i(f): H_i(X) \to H_i(X)$ denotes the induced morphism in homology with rational coefficients (see \cite{Arkowitz}). 
The Lefschetz number is an invariant of great importance in Algebraic Topology. In addition to being a generalization of the Euler-Poincaré characteristic, which is recovered by considering $f$ to be the identity, the Lefschetz number provides a bridge between Topology and Dynamics by means of guaranteeing certain fixed point theorems.

The goal of this paper is to develop a theory of integration with respect to the Lefschetz number, considered as a measure (more formally: a lattice valuation). The idea of integration with respect to topological invariants dates back, at least, to the work of Blaschke in Geometry \cite{Blaschke}, where integration with respect to the Euler-Poincaré charactersitic performed its appearance. Integration with respect to the Euler-Poincaré characteristic, also referred as Euler Calculus, was approached since then from many perspectives and within many fields of Mathematics, ranging from Algebraic Geometry (\cite{MacPherson}) and Sheaf Theory (\cite{Schapira}) to Analysis (\cite{Kashiwara}). In more recent times, Euler Calculus has become a resourceful tool for applications (\cite{Ghrist}).

In order construct a theory of integration with respect to the Lefschetz number, firstly we need and algebra of sets. However, the subcomplexes of a simplicial complex do not have such structure. Therefore, to fix this, we will work in the context of o-minimal structures containing the simplicial complexes. These structures emerged in the 1990’s as the leading axiomatization of Grothendieck’s vision of tame
topology. Moreover, as a side advantage of working in the context of o-minimal structures, we no longer define the Lefschetz number as in Equation (\ref{eq:def_lefschetz}), but we will introduce a novel combinatorial definition which agrees with the former for the case of simplicial complexes or, more generally, triangulable spaces. 

The idea behind the definition for the combinatorial Lefschetz number lies in Hopf's trace formula:
\begin{equation}\label{eq:Hopf_trace_formula}
    \Lambda(f)=\sum_{i \ge 0} (-1)^i \text{tr}(C_i(f): C_i(X) \to C_i(X) ),
\end{equation}
where $C_i(f): C_i(X) \to C_i(X)$ stands for morphism induced by $f$ between simplicial  chain complexes. We will modify the summation of Equation (\ref{eq:Hopf_trace_formula}) to be able to deal with definable (tame) not necessarily locally compact spaces. 
As a consequence of this approach, the combinatorial Lefschetz number extends the combinatorial Euler-Poincaré characteristic in \cite{Curry}. 

Moreover, in general (for non compact definable sets), two maps which are homotopic do not induce the same combinatorial Lefschetz number. However, the combinatorial Lefschetz number is invariant by homeomorphisms (under mild hypothesis). This extra flexibility with respect to the classical Lefschetz number will prove to be a valuable asset.  The invariance under homeomorphisms for the combinatorial Lefschetz number will be proved  inductively in the dimension of the definable sets involved. For the particular case of the combinatorial Euler-Poincaré charactersitic, the topological invariance was proved assuming less hypothesis in \cite{Beke,McCrory}.

Once the Lefschetz number is defined and its topological invariance under homeomorphisms is proved, we deduce a generalised fixed point theorem:

\medskip
{\noindent {\rm\bf Theorem \ref{thm:fixed_point} (Generalised fixed point theorem).}  
	Let $X$ be a complete and finite simplicial complex and let $f:X\rightarrow X$ be a homeomorphism. Suppose $U\subset X$ is a definable and $f$-invariant subset. If $\varLambda(f,U)_X\neq 0$, then $f$ has a fixed point in $\overline{U}$. 	} \medskip

This result can be applied to situations where the classical Lefschetz  fixed point theorem does not guarantee the existence of a fixed point as it is shown in Example \ref{ex:fixed_point_2}.

Then, we develop the integration with respect to Lefschetz number:

\medskip
{\noindent {\rm\bf Theorem \ref{thm:well_defined_integration} (Lefschetz integration).}  
The integral with respect to the Lefschetz number is well defined. Furthermore, it can be computed using the level sets of the function we are integrating: 
\begin{equation*}
\int_X h \,d\varLambda f=\sum_{k\in \mathbb{Z}}\varLambda(f,\{x\in X : h(x)=k\})_X\;.
\end{equation*}} \medskip

We prove a product rule and a Fubini theorem for fibered spaces:

\medskip
{\noindent {\rm\bf Theorem \ref{thm:fiber_bundles} (Fubini Theorem for Lefschetz integration).}  
	Let $(X, p, B)$ be a fiber bundle with typical fiber $F$ such that $p$ is a definable map, and $X$, $F$ and $B$ are complete simplicial complexes. Under some {\it tame} hypothesis:
\begin{equation*}
\int_{X}h\, d\varLambda l =\int_{B} \left(   \int_{p^{-1}(b)} h\,d\varLambda l  \right) d\chi\;.
\end{equation*}	} \medskip

In addition to this, the present integration theory will allow us to obtain certain practical applications, mainly in the field of object detention and traffic control using sensors. Let us recall one of the problems studied in \cite{Ghrist}. Consider a finite number of targets $T$ (for example people or vehicles) lying on a topological space $X$. This topological space can represent from the floor of an airport to the roads of a city. Assume that each point of $X$ has a sensor recording the nearby targets, and each target $t\in T$ has a target support:
$$U_t=\{x\in X\colon \text{the sensor at $x$ detects $t$}\}.$$
The sensors return the counting function $h\colon X\to \{0,1,2,\ldots\}$ given by the number of detectable sensors at
each point: $h(x)=|\{t\in T\colon x\in U_t\}|.$ Let us further assume that the Euler-Poincaré characteristic of all the target supports is equal to $N\neq 0$  (this holds, for example, if all the targets have contractible supports). Then, we can count the number of objectives (\cite[Theorem 3.2]{Ghrist}):
 \begin{equation}\label{formula_counting_Euler}
        |I|=\frac{1}{N}\int_X h\,d\chi\;.
    \end{equation} 

This approach has some limitations. First, $N\neq 0$ and second, the Euler-Poincaré characteristic of all the target supports has to be equal to $N$. We relax these hypothesis by using the Lefschetz number as a measure:

\medskip
{\noindent {\rm\bf Theorem \ref{thm:counting} (Counting Theorem).}  
	Let $h:X\rightarrow\mathbb{N}$ be a counting function. If there exists an homeomorphism $f:X\rightarrow X$ such that $h:\sum_{\alpha\in I}\mathds{1}_{U_\alpha}$, where every $U_\alpha$ is definable and $f$-invariant, and with $\varLambda(f,U_\alpha)_X=N\neq 0$, then
    \begin{equation*}
        |I|=\frac{1}{N}\int_X h\,d\varLambda f\;.
    \end{equation*} } \medskip

In Example \ref{ex:lefschetz_mellor_que_euler} we illustrate a situation in which the approach given by Equation (\ref{formula_counting_Euler}) can not be applied but our Theorem \ref{thm:counting} works. 

\subsection*{Acknowledgements} The authors thank Robert Ghrist and Jesús Antonio Álvarez López for enlightening discussions regarding the topic of this work.

\subsection*{Future work and open questions} The combinatorial Lefschetz number extends the combinatorial Euler-Poincaré characteristic in \cite{Curry}. The combinatorial Euler-Poincaré characteristic can be interpreted using sheaf theory and new properties arise (see \cite{Curry}). We wonder whether the approach using sheaf theory works for the Lefschetz number.

\section{Preliminaries}

We begin by fixing some notation and recalling some definitions from \cite{Dries}.  	An {\em open $n$-simplex} is the interior of an $n$-simplex $[v_0,\ldots, v_n]$, i.e., the set 
	$$\{t_0v_0+ \cdots+ t_nv_n \in \mathbb{R}^m \; \colon \; \sum_i{t_i}=1 , \; t_i> 0 \; \forall i\},$$
	where $v_0, \ldots, v_n$  are affine independent points in $\mathbb{R}^m$, $m\geq n$. 	A {\em generalized simplicial complex} is a finite collection $K $ of open simplices in $\mathbb{R}^n$, satisfying that given two open simplices in the complex, the intersection of their closures is the empty set or the closure of an open simplex in the complex. By {\em incomplete subcomplex} of a simplicial complex we mean a union of open simplicies wich it is not closed.

\begin{definition}\label{def:o-minimal_struc}
A \textit{$o$-minimal} structure over $\mathbb{R}$ is a collection $\mathscr{A}=\{\mathscr{A}_n\}_{n\in \mathbb{N}}$ so that the following properties hold:
\begin{enumerate}
\item $\mathscr{A}_n$ is an algebra of subsets of $\mathbb{R}^n$ for each $n\in \mathbb{N}$.
\item The family $\mathscr{A}$ is closed with respect to Cartesian products and canonical projections.
\item The subset $\{(x,y)\in\mathbb{R}^2, x<y\}$ is in $\mathscr{A}_2$.
\item The family $\mathscr{A}_1$ consists of all finite unions of points and open intervals of $\mathbb{R}$.
\item Every $\mathscr{A}_n$ contains all the algebraic subsets of $\mathbb{R}^n$.
\end{enumerate}

Given an o-minimal structure, we will say that a set $A\subset \mathbb{R}^n$ is \textit{definable} if $A\in \mathscr{A}_n$.
\end{definition}


From now on, we will use o-minimal structures that contain the semi-linear sets so that, in this way, the generalised simplicial complexes are definable. Moreover, it holds the following triangulation theorem:

\begin{theorem}[{Definable triangulation theorem \cite{Dries}}] \label{thm:triangulation} 
	Let $X\subset \mathbb{R}^n$ be a definable set and let $\{X_i\}_{i=1}^{m}$ be a finite family of definable subsets of $X$. Then there exists a definable triangulation of $X$ compatible with the collection of subsets.
\end{theorem}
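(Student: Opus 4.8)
The plan is to prove the statement by induction on the ambient dimension $n$ of $X\subset\mathbb{R}^n$, with the cell decomposition theorem for o-minimal structures (\cite{Dries}) as the main engine. Before starting the induction I would reduce to the case in which $X$ is bounded: the map $x\mapsto x/\sqrt{1+|x|^2}$ is a definable homeomorphism of $\mathbb{R}^n$ onto the open unit ball, so a triangulation of the images of $X$ and of the $X_i$ under this map pulls back to a triangulation of the original data. Hence assume from now on that $X$ and all the $X_i$ are bounded, and (replacing $X$ by $\overline X$ and adjoining the frontier to the family) that we are triangulating a compact definable set, recovering $X$ afterwards as a union of open simplices.

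For $n=1$, axiom (4) of Definition \ref{def:o-minimal_struc} already gives the result: $X$ and each $X_i$ is a finite union of points and open intervals, so taking the finitely many endpoints as vertices and the resulting open subintervals as $1$-simplices yields a compatible triangulation. For the inductive step, let $\pi\colon\mathbb{R}^n\to\mathbb{R}^{n-1}$ be the projection forgetting the last coordinate. After a generic linear change of coordinates I would put $X$ and the $X_i$ in good position with respect to $\pi$ and apply cell decomposition, obtaining a finite partition of $\mathbb{R}^n$ into cells, compatible with $X$ and all the $X_i$, such that each cell is either the graph of a definable continuous function over a cell in $\mathbb{R}^{n-1}$ or the open band strictly between two such graphs, and such that the images under $\pi$ of these cells form a cell decomposition of $\mathbb{R}^{n-1}$.

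Next I would apply the inductive hypothesis to triangulate a compact definable subset of $\mathbb{R}^{n-1}$ containing $\overline{\pi(X)}$, compatibly with (the traces of) all the cells $\pi(C)$ and of $\pi(X)$: this produces a finite simplicial complex $L$ and a definable homeomorphism $\varphi$ onto that set, each open simplex of $L$ being mapped into a single cell. I would then refine $L$ by further definable subdivisions so that for every open simplex $\sigma$ and every definable function $f$ describing a graph-cell over the cell containing $\varphi(\sigma)$, the composite $f\circ\varphi|_\sigma$ extends to a continuous definable function on the closed simplex $\overline\sigma$; this is possible by o-minimality, using the monotonicity theorem to force the relevant one-sided limits along the faces of $\sigma$ to exist. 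Over each such $\sigma$ the part of $X$ lying above $\varphi(\sigma)$ is then a finite ordered stack of graphs $g_1<\cdots<g_r$ together with the open bands between consecutive graphs, with each $g_j$ continuous on $\overline\sigma$. I would triangulate this stack by the standard ordered-prism construction: fix a linear order on the vertices of the refined $L$, place over each vertex $v$ of $\overline\sigma$ one new vertex on each graph $g_j$, and use the canonical triangulation of $\Delta^{n-1}\times[0,1]$ adapted to the heights $g_j$. Since this construction depends only on the vertex order, the triangulations obtained over adjacent base simplices agree on shared faces, so assembling them yields a finite simplicial complex $K$ and a definable homeomorphism onto $X$ carried by a union of open simplices of $K$ (an incomplete subcomplex in the terminology above), compatible with all the $X_i$. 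Every ingredient — cell decomposition, the defining functions, the subdivisions, the coning — is definable, so the resulting triangulation is definable.

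I expect the main obstacle to be the step that requires $f\circ\varphi|_\sigma$ to extend continuously to $\overline\sigma$: a definable function on an open cell need not have limits along its frontier, so one must subdivide the base triangulation finely enough — invoking the monotonicity theorem (and its parametric refinements) and the preliminary reduction to bounded sets to tame what would otherwise be $\pm\infty$ bands — before the prism construction can be performed. A secondary, bookkeeping-level difficulty is gluing the prism triangulations over different base simplices into an honest simplicial complex; this is handled, exactly as in the classical proof that a piecewise-linear map is triangulable, by fixing once and for all a total order on the vertices of $L$.
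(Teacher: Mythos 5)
The paper does not actually prove this statement: it is imported wholesale from van der Dries \cite{Dries}, so there is no internal proof to compare against. Your outline does reproduce the skeleton of the standard argument (reduction to the bounded case, induction on the ambient dimension, cell decomposition after a good choice of direction, triangulation of the base, ordered prisms glued via a fixed vertex order), so the remarks below compare it with that standard proof.

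There is a genuine gap at exactly the step you flag as the main obstacle, and the fix you propose does not work. You want every cell-defining function $f$ to become continuous on closed simplices after ``refining $L$ by further definable subdivisions'' and ``invoking the monotonicity theorem.'' No finite subdivision can achieve this. Consider the semialgebraic function $f(x,y)=x^2y/(x^4+y^2)$, continuous and bounded by $1/2$ on $\mathbb{R}^2\setminus\{0\}$ (so definable in any o-minimal expansion of the real field): along each line $y=tx$ it tends to $0$ at the origin, while along the parabola $y=x^2$ it is identically $1/2$. In any finite triangulation of a neighbourhood of $0$ having $0$ as a vertex, some closed $2$-simplex with vertex $0$ contains both a tail of that parabola and segments of lines $y=tx$, so $f$ admits no continuous extension to that closed simplex, however finely you subdivide. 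The monotonicity theorem controls one-variable limits along definable curves and cannot rule out this tangential oscillation along a face. The actual proof exploits a freedom your outline never uses: the triangulation of the base is a definable homeomorphism $\varphi\colon |L|\to\overline{\pi(X)}$, and it is $\varphi$ itself (not the simplicial structure of $L$) that must be chosen so that $f\circ\varphi$ extends continuously to each closed simplex. Securing that choice requires strengthening the inductive statement: in \cite{Dries} the triangulation theorem is proved jointly with a preparation lemma guaranteeing that finitely many prescribed bounded definable continuous functions become continuous up to the boundary after composing with the triangulating homeomorphism, and that lemma --- together with the good-direction lemma you do mention --- is where the real work lies. Without it the prism construction over closed simplices cannot even be set up. The remaining ingredients of your outline (the $n=1$ base case from axiom (4), compatibility bookkeeping, and the gluing of prisms by a total order on vertices) are sound.
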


We will always work on a prefixed algebra of sets. Formally, it will be the definable sets that are invariant by the map with respect to whose Lefschetz number we want to integrate. All of our simplicial complexes will be finite.

\section{Definition~of the combinatorial Lefschetz number}

In this section we introduce the novel definition of combinatorial Lefschetz number, we state some of its properties and obtain a fixed point theorem. This definition for a combinatorial Lefschetz number  extends the  Lefschetz number to a broader context: definable sets and generalised simplicial complexes. This way, it makes it possible to define an integration with respect to it. Moreover, it agrees with the standard definition when both are defined. Furthermore, it generalizes the notion of combinatorial Euler-Poincaré characteristic (see \cite{Ghrist}) and agrees with it for case of the identity map. As a consequence, we can prove a Lefschetz fixed point theorem in a broader context (see Theorem \ref{thm:fixed_point} and Examples \ref{ex:fixed_point_1} and \ref{ex:fixed_point_2}).

Let  $f: X\rightarrow X$ denote a homeomorphism, we will say that a subspace $A\subset X$ is \textit{$f$-invariant} if $f(A)\subset A$.
\subsection{Motivation and idea of the definition}
First of all, note that the homological Lefschetz number is not the right choice for a measure, since the integral would not satisfy the principle of additivity. For an example of this, consider the $1$-simplex $[ a,b]$. The homological Euler characteristic ---the Lefschetz number of the identity--- of  $[ a,b]$ is $1$ whereas the sum of the Euler characteristics of $[a]$, $[b]$, and $(a,b)$ is $3$. Thus, it is necessary to work with another Lefschetz number, which agrees with the homological number in complete simplicial complexes and which satifies a principle of additivity. Taking into account that a combinatorial Euler-Poincaré characteristic was defined for definable sets in \cite{Ghrist}, we should propose a combinatorial Lefschetz number that agrees with it for the case of the identity map.

We will define the combinatorial Lefschetz number of a definable set $U$ with respect to a complete (and finite) simplicial complex $X$ such that $U\subset X$. We will define it in several steps (Definitions \ref{def 1}, \ref{def 2} and \ref{def 3}). Roughly speacking, the idea for the definition of the combinatorial Lefschetz number of $f:U\subset X\rightarrow X$ goes as follows:
\begin{enumerate}
    \item We consider a triangulation $(L,\overline{K},K)$ of $X$ compatible with $(X, \overline{U}, U)$ (the existence of this triangulation guaranteed by Theorem \ref{thm:triangulation}) and a map $\tilde{f}\colon L\to L$ induced by $f$.
    \item We construct a simplicial approximation $\tilde{f}^{\text{\rm simp}}\colon L\to L$ of $\tilde{f}$ in $L$ (with respect to a possibly finer simplicial structure in $L$).
    \item Let $C_*(\tilde{f}^{\text{\rm simp}}):C_*(L)\rightarrow C_*(L)$ be the induced simplicial chain map. In each dimension $p$, we will say that an \textit{incomplete basis of $C_*(U)$} consists of the $p$-simplices $\sigma^p$ such that the open simplex corresponding to $\sigma^p$ is an open simplex of $U$. Thus, we can restrict the matrix of the map $C(\tilde{f}^{\text{\rm simp}})$ to the square submatrix of coefficients $(i,j)$ such that $\sigma_i^p$ and $\sigma_j^p$ belong to the incomplete basis of $C_*(U)$. Then, $\varLambda^c(C(\tilde{f}^{\text{\rm simp}}),C_*(U))$ is defined as the alternating sum of the traces of these submatrices in the different dimensions of the complex $X$.
    \item Finally, we set $\varLambda (f,U)_X=\varLambda^c(C(\tilde{f}^{\text{\rm simp}}),C_*(U)).$
\end{enumerate}

\begin{example}
    Let us consider the homeomorphism $\alpha:\mathrm{I}\times\mathrm{I}\rightarrow \mathrm{I}\times\mathrm{I}$ defined by $\alpha(x,y)=(x+x(1-x)(x-y),y)$, which dilates the interior of the square while keeping the diagonal fixed. Furthermore, restricted to the incomplete subcomplex $U$ shown in Figure \ref{complejo_u}, $\alpha$ is also a homeomorphism. A simplicial approximation of $\alpha$ is the identity, so in this case, it holds
    \begin{equation*}
        \varLambda(f,U)_{\mathrm{I}\times\mathrm{I}}=(-1)^1\cdot 3+(-1)^{-1}\cdot 2= -1.
    \end{equation*}
\end{example}
\begin{figure}[htb]
    \centering
     \includegraphics[scale=0.6]{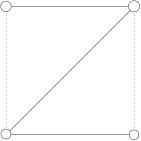} 
     \caption{Complex $U$.}
     \label{complejo_u}
\end{figure}

\subsection{Definition of the combinatorial Lefschetz number} We now address the definition of the combinatorial Lefschetz number.
\begin{definition}\label{def 1}
Let $f:X\rightarrow X$ be a homeomorphism of a simplicial complex to itself. Let $U\subset X$ be a definable $f$-invariant set. Let $\overline{U}$ be its closure in $X$. We define $\varLambda (f,U)_X$ as $\varLambda_c(\tilde{f}, K)$, where $\tilde{f}$ is the map induced by $f$ in a triangulation $(L,\overline{K},K)$ of $X$ compatible with $(X, \overline{U}, U)$ (the existence of this triangulation guaranteed by Theorem \ref{thm:triangulation}). We will say that $\varLambda(f,U)_X$ is the \textit{combinatorial Lefschetz number of $f$ in $U$ relative to $X$}.
\end{definition}

\begin{remark}
Under the above conditions, $f|_U$ can be extended to a homeomorphism of $\overline{U}$ to itself, so that we can consider $\varLambda(f,U)_{\overline{U}} $. We will see that, in this case, $\varLambda(f,U)_X=\varLambda(f,U)_{\overline{U}}$. In particular, $\varLambda(f,U)_X=\varLambda(f',U)_{X'}$ for any extension $f'$ of $f|_{\overline{U}}$ to $X' \supset \overline{U}$, in fact
\begin{equation*}
\varLambda(f,U)_X=\varLambda(f,U)_{\overline{U}}=\varLambda(f',U)_{X'}\;.
\end{equation*}
Note that the use of $\varLambda(f,U)_{\overline{U}}$ is a small abuse of notation since $\overline{U}$ is not a simplicial complex. Actually, we would have to write $\varLambda(\tilde{f},K)_{\overline{K}}$ instead of $\varLambda(f,U)_{\overline{U}}$.
\end{remark}

\begin{definition}\label{def 2}
Under the conditions of Definition~\ref{def 1}, let $\varLambda_c(\tilde{f},K)$ be defined as $\varLambda^c(\tilde{f}^{\text{\rm simp}},K)$, where $ \tilde{f}^{\text{\rm simp}}$ is a simplicial approximation of $\tilde{f}$ in $L$ (with respect to a possibly finer simplicial structure in $L$).
\end{definition}

\begin{remark}\label{invariancia frontera}
Let $g:X\rightarrow L$ be the triangulation compatible with $X$, $\overline{U}$ and $U$ chosen in Definition~\ref{def 1}. Since $U$ is $f$-invariant, we have $f(U)\subset U$, and therefore $f(\overline{U})\subset \overline{U}$. Furthermore, since $U$ is definable, we have that $X\setminus U$ 
is definable, and, since $f$ is homeomorphism, $X\setminus U$ is also $f$-invariant. Thus, since $f(\overline{U})\subset \overline{U}$, it follows that $\overline{U}\setminus U$ is $f$-invariant and definable ($\overline{U} $ is definable because it is the closure of a definable (see \cite[Lemma~3.4, Chapter~1]{Dries}).

Consequently, since $g$ is a homeomorphism, $g(U)$ and $g(\overline{U})\setminus g(U)$ are $\tilde{f}$-invariant (they are definable because they are unions of open simplices). Also, again, since $g$ is a homeomorphism, $g(\overline{U})=\overline{g(U)}=\overline{K}$, which is a simplicial (complete) subcomplex of $L $.
\end{remark}

\begin{remark}\label{invariancia simplicial}
Under the conditions of Definition~\ref{def 1}, note that, since $\overline{K}$ is $\tilde{f}$-invariant, $\tilde{f}^{\text{\rm simp} }$ maps chains of $\overline{K}$ to chains of $\overline{K}$. Furthermore, if $\tilde{f}^{\text{\rm simp}}$  is a simplicial approximation of $\tilde{f}:L\rightarrow L$, then $\tilde{f}^{\text{ \rm simp}}|_{\overline{K}}$ is a simplicial approximation of $\tilde{f}|_{\overline{K}}$, which suggests that $\varLambda(f,U)_X $ can be equal to $\varLambda(f,U)_{\overline{U}}$.
\end{remark}

\begin{remark}
In $L$ there does not have to exist a simplicial approximation of $\tilde{f}$, but there does exist such a simplicial approximation between a subdivision of $L$ and $L$. Then, it is simply needed to compose with the subdivision operator \cite[Theorem~17.2]{Munkres}. Really, we are using an abuse of notation in Definition~\ref{def 2}. What is called simplicial approximation is actually a modification of it so that the domain and co-domain coincide, and induces in homology the map $f$ in order to use the Hopf's trace theorem.

\end{remark}

It remains to define $\varLambda^c(\tilde{f}^{\text{\rm simp}},K)$.

\begin{definition}\label{def 3}
Let $\phi:C(X)\rightarrow C(X)$ be a simplicial chain map. Let $W\subset X$ be an incomplete subcomplex or a disjoint and finite union of them. In each dimension $p\geq 0$, the simplicial chains have the base of oriented $p$-symplices of $X$. Likewise, $\phi$ maps $p$-chains to $p$-chains. In each dimension $p$, we will said that an \textit{incomplete basis of W} consists of the $p$-simplices $\sigma^p$ such that $\Int(\sigma^p)$ is an open simplex of $W$. Thus, we can restrict the matrix of the map $\phi$ to the square submatrix of coefficients $(i,j)$ such that $\sigma_i^p$ and $\sigma_j^p$ belong to the incomplete basis of $W$. Then $\varLambda^c(\phi,W)$ is defined as the alternating sum of the traces of these submatrices in the different dimensions of the complex $X$.
\end{definition}



\subsection{Well-definess of combinatorial Lefschet number}
Let us see below that Definition~\ref{def 1} is consistent. In the case of the combinatorial Euler characteristic, this was done in \cite{Beke,McCrory}.

\begin{theorem}
Under the conditions of Definition~\ref{def 1}, the combinatorial Lefschetz number is well defined.
\end{theorem}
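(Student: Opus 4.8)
The plan is to show that the quantity $\varLambda(f,U)_X$ defined in Definition~\ref{def 1} does not depend on the auxiliary choices made along the way, namely: (a) the compatible definable triangulation $(L,\overline K, K)$ of $(X,\overline U, U)$ given by Theorem~\ref{thm:triangulation}, and (b) the simplicial approximation $\tilde f^{\mathrm{simp}}$ of $\tilde f$ (after passing, if necessary, to a subdivision of $L$). I would first isolate these two independence statements as the content to be proved, and then reduce (a) to (b) by a standard common-refinement argument: any two compatible triangulations of $(X,\overline U,U)$ admit a common definable subdivision compatible with the same data, so it suffices to compare the value computed on a triangulation with the value computed on a subdivision of it, and independence of the simplicial approximation handles the rest.

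First I would treat the simplicial-approximation ambiguity on a \emph{fixed} triangulation. The key point is Hopf's trace formula: for a simplicial chain self-map $\phi$ of a finite complex, the alternating sum of traces $\sum_p (-1)^p \operatorname{tr}(\phi_p)$ equals $\sum_p (-1)^p \operatorname{tr}((\phi_*)_p)$ on homology. Two simplicial approximations of $\tilde f$ are contiguous, hence chain homotopic, hence induce the same map on homology; so for the \emph{absolute} complex the alternating trace sum is an invariant. The subtlety here is that $\varLambda^c(\tilde f^{\mathrm{simp}},K)$ is the alternating sum of traces of the \emph{submatrices} indexed by the simplices of the incomplete subcomplex $U$, not of the full matrices. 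To handle this I would use the long exact sequence / block-triangular structure coming from the $\tilde f$-invariant filtration $g(U)\subset \overline K$ established in Remark~\ref{invariancia frontera}: since both $g(U)$ and $g(\overline U)\setminus g(U)$ are $\tilde f$-invariant, in suitable ordered bases the chain map $C(\tilde f^{\mathrm{simp}})$ restricted to $C_*(\overline K)$ is block upper triangular with diagonal blocks the matrix on $C_*(U)$ (the incomplete part) and the matrix on the chains carried by $\overline K\setminus U$. Additivity of trace over such block decompositions then lets me write $\varLambda^c$ of the incomplete piece as a difference of two genuine (absolute) Lefschetz numbers of complete complexes, each of which is a homological invariant by Hopf. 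Concretely, $\varLambda^c(C(\tilde f^{\mathrm{simp}}),C_*(U)) = \Lambda(\tilde f^{\mathrm{simp}},\overline K) - \Lambda(\tilde f^{\mathrm{simp}}, \overline K\setminus U)$, and by Remark~\ref{invariancia simplicial} the restrictions of the approximation to these invariant subcomplexes are themselves simplicial approximations of the corresponding restrictions of $\tilde f$, so each term is independent of the chosen approximation.

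Next I would address subdivision invariance, i.e.\ that replacing $(L,\overline K,K)$ by a definable subdivision $(L',\overline K',K')$ compatible with $(X,\overline U,U)$ leaves the value unchanged. Again using the decomposition into the two invariant, complete subcomplexes $\overline K$ and $\overline K\setminus U$, it suffices to prove subdivision invariance for the \emph{absolute} Lefschetz number of a self-map of a complete simplicial complex — and that is classical: the barycentric (or any definable) subdivision operator $\mathrm{sd}\colon C_*(L)\to C_*(L')$ is a chain equivalence commuting up to chain homotopy with the chain maps induced by $\tilde f$, so the homological maps agree and Hopf's formula gives equality of alternating trace sums. Combining this with a common-refinement argument — any two compatible triangulations of $(X,\overline U,U)$ have a common compatible definable subdivision, which exists by applying Theorem~\ref{thm:triangulation} to the union of the two simplicial structures — yields independence of the triangulation, completing the proof that $\varLambda(f,U)_X$ is well defined.

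The main obstacle I anticipate is bookkeeping with the incomplete (non-closed) subcomplex $U$: the chain ``submatrix'' in Definition~\ref{def 3} is not the matrix of a chain map on an honest complex (the boundary operator does not preserve $C_*(U)$), so one cannot invoke Hopf's theorem directly on it. The whole strategy hinges on expressing this submatrix trace as a difference of traces over the two complete, $\tilde f$-invariant pieces $\overline K$ and $\overline K\setminus U$ — which is legitimate precisely because of the invariant filtration in Remark~\ref{invariancia frontera} — and then reducing everything to the classical invariance of the Lefschetz number of a simplicial self-map under contiguity and subdivision. A secondary point requiring care is that the ``simplicial approximation'' used in Definition~\ref{def 2} is really a modified map with equal domain and codomain inducing $f_*$ on homology; I would need to check that any two such modifications are still chain homotopic over each invariant subcomplex, which follows from the contiguity of simplicial approximations together with functoriality of the subdivision chain equivalence.
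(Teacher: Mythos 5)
Your overall architecture --- separating independence of the simplicial approximation from independence of the triangulation, and reducing the incomplete piece $K$ to complete pieces where Hopf's trace theorem applies --- is the right one and is the same strategy the paper uses. But there is a genuine gap at the central step. You write $\varLambda^c(C(\tilde f^{\mathrm{simp}}),C_*(U)) = \Lambda(\tilde f^{\mathrm{simp}},\overline K) - \Lambda(\tilde f^{\mathrm{simp}}, \overline K\setminus U)$ and assert that \emph{both} terms on the right are absolute Lefschetz numbers of complete complexes, hence homological invariants by Hopf. The first is; the second is not. The collection of simplices of $\overline K$ whose interiors do not lie in $K$ is in general not closed under passing to faces, so $\overline K\setminus K$ is again an \emph{incomplete} subcomplex and the second term is again only a combinatorial trace sum of the same problematic kind, not a Hopf-invariant quantity. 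Concretely, let $U$ be an open $2$-simplex together with exactly one of its three vertices: then $\overline U$ is the closed triangle and $\overline U\setminus U$ consists of the three open edges plus the two remaining vertices; the omitted vertex is a face of two of those edges, so this set is not a closed subcomplex. Your one-step difference therefore does not terminate the argument, and the same defect infects your subdivision-invariance step, which also leans on ``the two complete pieces $\overline K$ and $\overline K\setminus U$.''

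The repair is to iterate your own identity: $\varLambda^c(\cdot,K)=\Lambda(\cdot,\overline K)-\varLambda^c(\cdot,\overline K\setminus K)$, then apply it again to $\overline K\setminus K$ inside its closure $\overline{\overline K\setminus K}$ (which \emph{is} complete and still $\tilde f$-invariant, by the argument of Remark~\ref{invariancia frontera}), and so on. Since every top-dimensional simplex of $\overline K$ already lies in $K$, the dimension drops strictly at each stage, and the recursion bottoms out at a finite set of points, where all choices agree. This descending induction on dimension is exactly the paper's proof of Lemma~\ref{primer lema}; the analogous recursion, combined with the homeomorphism invariance of the homological Lefschetz number on the successive complete closures (which are pairwise homeomorphic for the two triangulations), is the paper's Lemma~\ref{segundo lema}. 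Note also that your common-refinement reduction is not immediate as stated: two compatible triangulations are homeomorphisms onto \emph{different} polyhedra, so a ``common definable subdivision'' requires triangulating the transition homeomorphism; the paper avoids this by invoking only homeomorphism invariance of $\Lambda$ on the complete pieces rather than a subdivision chain equivalence.
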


In order to prove the theorem, we must do the following: 
\begin{enumerate}[(a)]
\item\label{(a)} To prove  that $\varLambda(f,U)_X=\varLambda(f,U)_{\overline{U}}$.
\item\label{(b)} To check that, if $(L,\overline{K},K)$ and $(L',\overline{K}',K')$ are two triangulations of $X$ compatible with $X$, $\overline{U}$ and $U$, and $\tilde{f}$ and $\tilde{f}'$ are the maps induced on them by $f$ then $\varLambda_c(\tilde{f},K)=\varLambda_c(\tilde{f}',K')$. 
\item\label{(c)} To show that $\varLambda^c(\tilde{f}^{\text{\rm simp}},K)=\varLambda^c(\tilde{f}^{\text{\rm simp}\prime},K)$ when $\tilde {f}^{\text{\rm simp}}$ and $\tilde{f}^{\text{\rm simp}\prime}$ are two simplicial approximations of $\tilde{f}$ (defined on simplicial structures of $|K| $ which may be different).
\end{enumerate}

Let us begin by showing~\ref{(c)}.

\begin{lemma}\label{primer lema}
Let $X$ be a simplicial complex and $f:X\rightarrow X$ a homeomorphism. Let $U\subset X$ be a definable and $f$-invariant subspace. Let $(L,\overline{K}, K)$ be a triangulation of $X$ compatible with $X$, $\overline{U}$ and $U$, and let $\tilde{f}$ be the map induced by $f$ in $L$. Let $\tilde{f}^{\text{\rm simp}}$ and $\tilde{f}^{\text{\rm simp}\prime}$ be two simplicial approximations of $\tilde{f}$. Then $\varLambda^c(\tilde{f}^{\text{\rm simp}},K)=\varLambda^c(\tilde{f}^{\text{\rm simp}\prime}, K )$.
\end{lemma}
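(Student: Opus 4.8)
The plan is to express $\varLambda^c(\tilde{f}^{\text{\rm simp}},K)$ in terms of homological invariants of $\tilde{f}$, which are manifestly independent of the choice of simplicial approximation, and to do so through an inclusion--exclusion over the complete subcomplex $\overline K$ combined with an induction on dimension. Throughout, write $\psi$ for the chain self-map of $C_*(L)$ attached to a simplicial approximation of $\tilde f$ (obtained, as usual, by composing the simplicial map with the relevant subdivision chain operator so that domain and codomain agree; when two approximations are defined over different subdivisions, one first passes to a common refinement). The first, purely linear-algebraic, step records that, because the chosen triangulation is compatible with $(X,\overline U,U)$, for each $p$ the $p$-simplices of $\overline K$ split as the disjoint union of the incomplete basis of $K$ and the set of $p$-simplices lying in $\overline K\setminus K$; since the trace of a principal submatrix is the sum of the relevant diagonal entries, this gives, for every such $\psi$,
\begin{equation*}
\varLambda^c(\psi,K)=\varLambda^c(\psi,\overline K)-\varLambda^c(\psi,\overline K\setminus K).
\end{equation*}

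Next I would dispose of the two terms on the right. For the complete subcomplex $\overline K$, which is $\tilde f$-invariant by Remark~\ref{invariancia frontera}, completeness ensures that the simplicial approximation carries subdivided simplices of $\overline K$ back into $\overline K$, so $\psi$ restricts to a chain self-map of $C_*(\overline K)$; by Remark~\ref{invariancia simplicial} this restriction is induced by a simplicial approximation of $\tilde f|_{\overline K}$ and hence realizes $(\tilde f|_{\overline K})_*$ in homology, so Hopf's trace theorem gives $\varLambda^c(\psi,\overline K)=\Lambda(\tilde f|_{\overline K})$, a quantity independent of the approximation. For the possibly incomplete subcomplex $\overline K\setminus K$ I would prove, by induction on dimension, the general statement that for any $\tilde f$-invariant (possibly incomplete) subcomplex $W\subset L$ the number $\varLambda^c(\psi,W)$ depends only on $\tilde f$: if $W$ is complete, Hopf's theorem applies as above; otherwise apply the displayed identity with $W$ in place of $K$, observing that $\overline W$ is a complete $\tilde f$-invariant subcomplex (handled by Hopf) and that $\overline W\setminus W$ is again $\tilde f$-invariant but of strictly smaller dimension, since no top-dimensional simplex of $W$ is lost on passing to the closure; the base case $\dim W=0$ is immediate, a $0$-dimensional subcomplex being automatically complete. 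Feeding this back into the displayed identity with $W=\overline K\setminus K$ shows that $\varLambda^c(\tilde{f}^{\text{\rm simp}},K)$ equals $\Lambda(\tilde f|_{\overline K})$ minus a quantity depending only on $\tilde f$, whence $\varLambda^c(\tilde{f}^{\text{\rm simp}},K)=\varLambda^c(\tilde{f}^{\text{\rm simp}\prime},K)$.

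The step I expect to cost the most work is the verification of the $\tilde f$-invariance claims underpinning both the reduction and the induction --- that $\overline K\setminus K$, and at each inductive stage $\overline W\setminus W$, is genuinely $\tilde f$-invariant --- together with the accompanying check that the restricted chain maps really induce the restricted homomorphisms on homology, so that Hopf's trace theorem is applicable (this is the substance of Remarks~\ref{invariancia frontera} and \ref{invariancia simplicial}, and uses that $f$, hence $\tilde f$, is a homeomorphism of a compact complex). A more direct alternative would be to pass to a common subdivision, note that $\tilde{f}^{\text{\rm simp}}$ and $\tilde{f}^{\text{\rm simp}\prime}$ are then contiguous, and attempt to build an explicit chain homotopy between them compatible with the filtration of $C_*(\overline K)$ by $C_*(\overline K\setminus K)$, so that the invariance of the alternating trace under chain homotopy could be applied block by block; this route is delicate precisely because $\overline K\setminus K$ need not be a closed subcomplex, which is why the homological reduction above is preferable.
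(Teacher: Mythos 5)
Your proposal is correct and follows essentially the same route as the paper: the decomposition $\varLambda^c(\psi,\overline K)=\varLambda^c(\psi,K)+\varLambda^c(\psi,\overline K\setminus K)$, Hopf's trace theorem applied to the complete $\tilde f$-invariant closures, and a descent on dimension via $W\mapsto \overline W\setminus W$ down to the $0$-dimensional (hence complete) base case. Your formulation as an explicit induction on the dimension of an arbitrary $\tilde f$-invariant incomplete subcomplex is just a cleaner packaging of the paper's recurrence.
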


\begin{proof}
As we already mentioned in Remark~\ref{invariancia frontera}, $K, \overline{K}$ and $\overline{K}\setminus K$ are $\tilde{f}$-invariant. Additionally, $\tilde{f}^{\text{\rm simp}}$ and $\tilde{f}^{\text{\rm simp}\prime}$ map chains of $\overline{K}$ to chains of $\overline{K}$ (Remark~\ref{invariancia simplicial}). Note also that $K$ is a finite (disjoint) union of incomplete complexes (in particular, it is an incomplete complex) but $\overline{K}$ is a complete subcomplex of $L$ (this is why we defined the combinatorial Lefschetz number relative to a complete simplicial complex, since if $L$ were not a complete complex, $\overline{K}$ would not have to be a complete simplicial complex).

The proof is made by recurrence.

Due to the definition of $\varLambda^c$, we have: 
\begin{align*}
\varLambda(\tilde{f}^{\text{\rm simp}}, \overline{K})&=\varLambda^c(\tilde{f}^{\text{\rm simp}}, \overline{K}\setminus K) + \varLambda^c(\tilde{f}^{\text{\rm simp}}, K)\;,\\
\varLambda(\tilde{f}^{\text{\rm simp}\prime}, \overline{K})&=\varLambda^c(\tilde{f}^{\text{\rm simp}\prime}, \overline{K}\setminus K) + \varLambda^c(\tilde{f}^{\text{\rm simp}\prime}, K)\;.
\end{align*}
(Note that $\overline{K}\setminus K$ satisfies the conditions of Definition~\ref{def 3}, so it makes sense to consider $\varLambda^c(\tilde{f}^{\text{\rm simp} },\overline{K}\setminus K)$.) 

Now, by the Hopf trace theorem \cite[Theorem~22.1]{Munkres}, since $\tilde{f}^{\text{\rm simp}}$ and $\tilde{f}^{ \text{\rm simp}\prime}$ are simplicial approximations of $\tilde{f}|_{\overline{K}}$, we have
\begin{equation*}
\varLambda(\tilde{f}^{\text{\rm simp}},\overline{K})=\varLambda(\tilde{f},\overline{K})=\varLambda(\tilde{f}^{\text{\rm simp}\prime},\overline{K})\;.
\end{equation*}
On the other hand, note that we can repeat the process considering $\overline{K}\setminus K$ instead of $K$, since $\overline{\overline{K}\setminus K}$ is a complete complex $\tilde{f}$-invariant, $\tilde{f}^{\text{\rm simp}}$ and $\tilde{f}^{\text{\rm simp}\prime}$ map chains of $\overline{ \overline{K}\setminus K}$ to chains of $\overline{\overline{K}\setminus K}$ and are simplicial approximations of $\tilde{f}|_{\overline{\overline{K}\setminus K} }$, and $(\overline{\overline{K}\setminus K})\setminus (\overline{K}\setminus K)$ satisfies the conditions of Definition~\ref{def 3}. In this way, we get
\begin{align*}
\varLambda(\tilde{f}^{\text{\rm simp}},\overline{\overline{K}\setminus K})&=\varLambda^c(\tilde{f}^{\text{\rm simp}}, (\overline{\overline{K}\setminus K})\setminus (\overline{K}\setminus K))+\varLambda^c(\tilde{f}^{\text{\rm simp}},\overline{K}\setminus K)\;,\\
\varLambda(\tilde{f}^{\text{\rm simp}\prime},\overline{\overline{K}\setminus K})&=\varLambda^c(\tilde{f}^{\text{\rm simp}\prime}, (\overline{\overline{K}\setminus K})\setminus(\overline{K}\setminus K))+\varLambda^c(\tilde{f}^{\text{\rm simp}\prime},\overline{K}\setminus K)\;.
\end{align*}
Again, note that 
\begin{equation*}
\varLambda(\tilde{f}^{\text{\rm simp}},\overline{\overline{K}\setminus K})= \varLambda(\tilde{f}^{\text{\rm simp}\prime},\overline{\overline{K}\setminus K})\;.
\end{equation*}
Now, with the recurrence process, the dimension of the complexes becomes smaller since $\dim(\overline{K}\setminus K)>\dim((\overline{\overline{K}\setminus K})\setminus (\overline{K}\setminus K))>\cdots$  Eventually, the dimension of the first term on the right-hand side of the equalities is zero. Then that complex, denoted by $K_0$, is a finite union of points ,and thus $\tilde{f}|_{K_0}=\tilde{f}^{\text{\rm simp}}|_{K_0}=\tilde{f}^{\text{ \rm simp}\prime}|_{K_0}$. In that case,
\begin{equation*}
\varLambda^c(\tilde{f}^{\text{\rm simp}},K_0)=\varLambda^c(\tilde{f}^{\text{\rm simp}\prime},K_0)\;,
\end{equation*}
and, going back in every dimension, we get 
\begin{equation*}
\varLambda^c(\tilde{f}^{\text{\rm simp}},K)=\varLambda^c(\tilde{f}^{\text{\rm simp}\prime},K)\;.\qedhere
\end{equation*}
\end{proof}

Now let us prove~\ref{(b)}.

\begin{lemma}\label{segundo lema}
Let $X$ be a simplicial complex and $f:X\rightarrow X$ a homeomorphism. Let $U\subset X$ be a definable $f$-invariant subset. Let $(L,\overline{K}, K)$ and $(L',\overline{K}',K')$ be two triangulations of $X$ compatible with $X$, $\overline{U}$ and $U$, and let $\tilde{f}$ and $\tilde{f}'$ be the maps induced by $f$ on $L$ and $L'$, respectively. Then $\varLambda_c(\tilde{f},K)=\varLambda_c(\tilde{f}',K')$.
\end{lemma}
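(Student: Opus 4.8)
The plan is to reduce the independence of the choice of triangulation to a single ``common refinement'' argument, exactly as in the case of the combinatorial Euler characteristic treated in \cite{Beke,McCrory}. First I would invoke the definable triangulation theorem (Theorem~\ref{thm:triangulation}) once more to produce a triangulation $(L'', \overline{K}'', K'')$ of $X$ that is compatible not only with $(X, \overline{U}, U)$ but also simultaneously with all the open simplices of $L$ and $L'$ (equivalently, with the partition of $X$ into the pieces $g^{-1}(\Int\sigma)$ and $g'^{-1}(\Int\tau)$, $\sigma \in L$, $\tau\in L'$). Then $L''$ subdivides both $L$ and $L'$ after transporting everything to $X$, and it suffices to prove the statement for the special case in which $(L',\overline{K}',K')$ is a subdivision of $(L,\overline{K},K)$; the general case follows by comparing $L$ with $L''$ and $L'$ with $L''$.

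So the core step is: if $L'$ subdivides $L$ and $\tilde f, \tilde f'$ are the maps induced by $f$ on $L, L'$, then $\varLambda_c(\tilde f, K) = \varLambda_c(\tilde f', K')$. The plan is to mimic the recurrence in Lemma~\ref{primer lema}. By Definition~\ref{def 3}, $\varLambda^c(\phi, \overline{K}) = \varLambda^c(\phi, \overline{K}\setminus K) + \varLambda^c(\phi, K)$ on both sides, so it is enough to show that $\varLambda^c$ is unchanged under subdivision for $\overline{K}$ itself (a complete subcomplex) and then descend through the filtration $\overline{K}\setminus K \supset \overline{\overline{K}\setminus K}\setminus(\overline{K}\setminus K) \supset \cdots$ by strictly decreasing dimension, bottoming out at a finite set of points where all three of $\tilde f$, its simplicial approximation and the subdivided version agree. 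For the complete subcomplex $\overline{K}$ the required equality $\varLambda(\tilde f^{\mathrm{simp}}, \overline{K}) = \varLambda(\tilde f'^{\mathrm{simp}}, \overline{K}')$ is just the classical homotopy/triangulation invariance of the (homological) Lefschetz number, obtained via Hopf's trace theorem \cite[Theorem~22.1]{Munkres}: both simplicial approximations, after composing with the relevant subdivision operators, induce on $H_*(\overline{U})$ the same map $f_*$, so the alternating sums of traces on chains agree with $\Lambda(f|_{\overline{U}})$. Crucially, since both the subdivision chain map and the simplicial approximations respect the $\tilde f$-invariant complete subcomplex $\overline{\overline{K}\setminus K}$ (and its iterates) --- this is the content of Remarks~\ref{invariancia frontera} and \ref{invariancia simplicial} --- the same argument applies verbatim at every stage of the filtration.

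I would organize the write-up as: (1) reduce to the subdivision case via a common compatible triangulation; (2) set up the decreasing filtration of complete subcomplexes $\overline{K}\supset \overline{K}\setminus K$, then $\overline{\overline{K}\setminus K}$, etc., noting each step drops the dimension and each piece is $\tilde f$-invariant and definable (Remark~\ref{invariancia frontera}); (3) at each complete subcomplex apply Hopf's trace theorem to both triangulations to equate $\varLambda$ with the homological $\Lambda$ of the corresponding restriction of $f$, hence to each other; (4) subtract off via Definition~\ref{def 3} to transfer the equality from the complete subcomplex $\overline{\,\cdot\,}$ to the incomplete piece; (5) run the recurrence down to dimension zero, where agreement is immediate, and back up.

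The main obstacle I anticipate is bookkeeping the compatibility of the subdivision operator with the $\tilde f$-invariant filtration: one has to check that the ``modified simplicial approximation'' used in Definition~\ref{def 2} (the one whose domain and codomain coincide, built by precomposing with a subdivision operator as in \cite[Theorem~17.2]{Munkres}) still carries chains of each complete subcomplex $\overline{\overline{K}\setminus K}$, $\overline{\overline{\overline{K}\setminus K}\setminus(\cdots)}$, \dots into themselves, so that the restricted chain maps are genuinely simplicial approximations of the restrictions of $\tilde f$ and Hopf's trace theorem is applicable on each piece. Once that invariance is in hand --- which is essentially guaranteed because $f$ preserves each of these definable sets and subdivisions are canonical --- the rest is the same recurrence already carried out in Lemma~\ref{primer lema}, and~(b) follows.
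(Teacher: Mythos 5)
Your proposal matches the paper's proof in all essentials: the same descending filtration $\overline{K}\supset\overline{K}\setminus K\supset\overline{\overline{K}\setminus K}\setminus(\overline{K}\setminus K)\supset\cdots$ by strictly decreasing dimension, the same appeal to Hopf's trace theorem and the topological invariance of the classical Lefschetz number on each complete subcomplex $\overline{K}\approx\overline{K'}$, $\overline{\overline{K}\setminus K}\approx\overline{\overline{K'}\setminus K'}$, \dots, and the same recurrence down to dimension zero and back up, with the same invariance bookkeeping from Remarks~\ref{invariancia frontera} and~\ref{invariancia simplicial}. The only divergence is your preliminary reduction to a common refinement, which the paper omits and which your own argument makes unnecessary, since your step (3) only uses that the corresponding pieces of the two triangulations are homeomorphic (being triangulations of the same definable subsets of $X$), not that one subdivides the other.
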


\begin{proof}
Again, a triangulation is supported by $\overline{\overline{U}\setminus U}$, $(\overline{\overline{U}\setminus U})-(\overline{U}\setminus U)$, \dots, and also
\begin{equation*}
\overline{\overline{K}\setminus K}\approx \overline{\overline{K'}\setminus K'}\;,\quad 
(\overline{\overline{K}\setminus K})\setminus (\overline{K}\setminus K)\approx (\overline{\overline{K'}\setminus K'})\setminus (\overline{K'}\setminus K')\;,\quad\dots
\end{equation*}
Finally, $\overline{K}$ (respectively, $\overline{K'}$), $\overline{\overline{K}\setminus K}$, \dots\ are complete and $\tilde{f}$-invariant (respectively, $\tilde{f}'$-invariant) complexes. So $\tilde{f}^{\text{\rm simp}}$ (respectively, $\tilde{f}^{\prime\text{\rm simp}}$) maps chains of $\overline{K}$, $\overline{\overline{K}\setminus K}$, \dots\ to chains of $\overline{K}$, $\overline{\overline{K}\setminus K}$, \dots

We argue by recurrence. On the one hand, we have:
\begin{alignat*}{2}
\varLambda_c(\tilde{f},\overline{K})&=\varLambda^c(\tilde{f}^{\text{\rm simp}},\overline{K})\;,&\quad
\varLambda_c(\tilde{f}',\overline{K'})&=\varLambda^c(\tilde{f}^{\prime\text{\rm simp}},\overline{K'})\;,\\
\varLambda_c(\tilde{f},K)&=\varLambda^c(\tilde{f}^{\text{\rm simp}},K)\;,&\quad
\varLambda_c(\tilde{f}',K')&=\varLambda^c(\tilde{f}^{\prime\text{\rm simp}},K')\;,\\
\varLambda_c(\tilde{f},\overline{K}\setminus K)&=\varLambda^c(\tilde{f}^{\text{\rm simp}},\overline{K}\setminus K)\;,&\quad
\varLambda_c(\tilde{f}',\overline{K'}\setminus K')&=\varLambda^c(\tilde{f}^{\prime\text{\rm simp}},\overline{K'}\setminus K')\;.
\end{alignat*}
Note that we have already seen that the numbers on the left of the equality will not depend on the simplicial approximation chosen.
Since
\begin{equation*}
\varLambda^c(\tilde{f}^{\text{\rm simp}},\overline{K})
=\varLambda^c(\tilde{f}^{\text{\rm simp}},\overline{K}\setminus K) + \varLambda^c(\tilde{f}^{\text{\rm simp}},K)\;,
\end{equation*}
and the same for $K'$, we get:
\begin{align*}
\varLambda_c(\tilde{f},\overline{K})&=\varLambda^c(\tilde{f}^{\text{\rm simp}},\overline{K})
=\varLambda^c(\tilde{f}^{\text{\rm simp}},\overline{K}\setminus K)+\varLambda^c(\tilde{f}^{\text{\rm simp}},K)\\
&=\varLambda_c(\tilde{f},\overline{K}\setminus K)+\varLambda_c(\tilde{f},K)\;,\\
\varLambda_c(\tilde{f}',\overline{K'})&=\varLambda^c(\tilde{f}^{\prime\text{\rm simp}},\overline{K'})=\varLambda^c(\tilde{f}^{\prime\text{\rm simp}},\overline{K'}\setminus K')+\varLambda^c(\tilde{f}^{\prime\text{\rm simp}},K')\\
&=\varLambda_c(\tilde{f}',\overline{K'}\setminus K')+\varLambda_c(\tilde{f}',K')\;.
\end{align*}
Now, since $\overline{K}$ and $\overline{K'}$ are homeomorphic, we get $\varLambda^c(\tilde{f}^{\text{\rm simp}},\overline{K })=\varLambda^c(\tilde{f}^{\prime\text{\rm simp}},\overline{K'})$ because, by the Hopf's trace theorem,
\begin{equation*}
\varLambda^c(\tilde{f}^{\text{\rm simp}},\overline{K})=\varLambda(\tilde{f},\overline{K})\;,\quad
\varLambda^c(\tilde{f}^{\prime\text{\rm simp}},\overline{K'})=\varLambda(\tilde{f}',\overline{K'})\;,
\end{equation*}
and by \cite[Section~22]{Munkres}, we have $\varLambda(\tilde{f},\overline{K})=\varLambda(\tilde{f}',\overline{K'})$, obtaining $\varLambda_c(\tilde{f},\overline{K})=\varLambda_c(\tilde{f}',\overline{K'})$.

Again, $\overline{K}\setminus K$ and $\overline{K'}\setminus K'$ are in the same conditions as $K$ and $K'$. So, repeating the above argument, we get
\begin{align*}
\varLambda_c({\tilde{f},\overline{\overline{K}\setminus K}})&=\varLambda_c(\tilde{f},(\overline{\overline{K}\setminus K})\setminus (\overline{K}\setminus K))+\varLambda_c(\tilde{f},\overline{K}\setminus K)\;,\\
\varLambda_c({\tilde{f}',\overline{\overline{K'}\setminus K'}})&=\varLambda_c(\tilde{f}',(\overline{\overline{K'}\setminus K'})\setminus (\overline{K'}\setminus K'))+\varLambda_c(\tilde{f}',\overline{K'}\setminus K')\;.
\end{align*}
And again, by \cite[Theorem~22.1 and Section~2]{Munkres}, we have $\varLambda_c(\tilde{f},\overline{\overline{K}\setminus K} )=\varLambda_c(\tilde{f}',\overline{\overline{K'}\setminus K'})$. By repeating this process, eventually the dimension of the first member on the right of the equality is zero, and therefore the Lefschetz numbers are equal for $\tilde{f}$ and $\tilde{f}'$. Hence, $\varLambda_c(\tilde{f},K)=\varLambda_c(\tilde{f}',K')$.
\end{proof}

The property~\ref{(a)}  follows by repeating the arguments of Lemmas~\ref{primer lema} and~\ref{segundo lema}.

\section{Properties and consequences} We derive some properties of the combinatorial Lefschetz number which we will use later. Moreover, we derive a fixed point theorem.

\subsection{Additivity. Inclusion-Exclusion.
Product property.}

Note that the combinatorial Lefschetz number is additive in the sense that, if $U$ and $V$ are two disjoint definable and $f$-invariant subsets of a simplicial complex $X$, then $\varLambda(f, U\cup V)_X=\varLambda(f,U)_X+\varLambda(f,V)_X$. More generally, if $U$ and $V$ are non disjoint, then: $$\varLambda(f, U\cup V)_X=\varLambda(f,U)_X+\varLambda(f,V)_X - \varLambda(f, U\cap V)_X.$$


Moreover, it holds a product property.

\begin{theorem}[Product property of the combinatorial Lefchetz number]\label{regla del producto}
Let $f:X\rightarrow X$ be a homeomorphism of a complete simplicial complex to itself. Let $U,U_1,U_2\subset X$ be definable sets such that $U_1,U_2\subset U$, $\overline{U}\overset{g}{\approx}\overline{U_1}\times \overline{U_2}$, $U\overset{g|_{U}}{\approx}U_1\times U_2$, $f|_{\overline{U}}\equiv f_1\times f_2$ and $f| _{U}\equiv f_1|_{U_1}\times f_2|_{U_2}$, where $f_1:\overline{U_1}\rightarrow\overline{U_1}$ and $f_2:\overline{U_2}\rightarrow \overline{U_2}$ are homeomorphisms whose restrictions to $U_1$ and $U_2$ are also homeomorphisms. Then
\begin{equation}\label{regla producto}
\varLambda(f,U)_X=\varLambda(f_1,U_1)_{\overline{U_1}}\cdot\varLambda(f_2,U_2)_{\overline{U_2}}.
\end{equation}
\end{theorem}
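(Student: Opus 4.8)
The plan is to reduce the product formula to the classical statement that the Lefschetz number is multiplicative under Cartesian products, and then to pass to the incomplete subcomplexes by the same additivity/recurrence scheme already used in Lemmas~\ref{primer lema} and~\ref{segundo lema}. First I would fix a triangulation of $\overline{U_1}$ compatible with $(\overline{U_1},U_1)$ and of $\overline{U_2}$ compatible with $(\overline{U_2},U_2)$, take the product triangulation (after the standard subdivision turning a product of simplicial complexes into a simplicial complex), and transport it through $g$ to get a triangulation of $\overline{U}$ compatible with $(\overline{U},U)$. Because $\varLambda(f,U)_X=\varLambda(f,U)_{\overline{U}}$ by part~\ref{(a)} of the well-definedness theorem, and by Lemma~\ref{segundo lema} the value is independent of the chosen compatible triangulation, I may compute $\varLambda(f,U)_{\overline{U}}$ using this product triangulation. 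The hypotheses $f|_{\overline{U}}\equiv f_1\times f_2$ and $f|_U\equiv f_1|_{U_1}\times f_2|_{U_2}$ guarantee that the induced simplicial map respects the product structure, so one can choose the simplicial approximation $\tilde f^{\text{\rm simp}}$ to be the product $\tilde f_1^{\text{\rm simp}}\times \tilde f_2^{\text{\rm simp}}$ of simplicial approximations of the factors.

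The next step is the chain-level computation. On the product complex one has the Eilenberg--Zilber/Künneth isomorphism $C_*(\overline{K_1}\times\overline{K_2})\cong C_*(\overline{K_1})\otimes C_*(\overline{K_2})$, under which the product simplicial map induces $C_*(\tilde f_1^{\text{\rm simp}})\otimes C_*(\tilde f_2^{\text{\rm simp}})$ up to chain homotopy; taking traces dimension by dimension and using $\operatorname{tr}(A\otimes B)=\operatorname{tr}(A)\operatorname{tr}(B)$ together with the alternating-sign bookkeeping gives the classical identity $\varLambda(\tilde f^{\text{\rm simp}},\overline K)=\varLambda(\tilde f_1^{\text{\rm simp}},\overline{K_1})\cdot\varLambda(\tilde f_2^{\text{\rm simp}},\overline{K_2})$. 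For the combinatorial refinement I would argue by the same recurrence as in Lemma~\ref{primer lema}: write $\varLambda^c(\tilde f^{\text{\rm simp}},\overline K)=\varLambda^c(\tilde f^{\text{\rm simp}},K)+\varLambda^c(\tilde f^{\text{\rm simp}},\overline K\setminus K)$ and likewise for the factors. The crucial combinatorial point is that an open simplex of the product triangulation lies in $U=U_1\times U_2$ (transported by $g$) if and only if each of its two ``projection'' open faces lies in $U_1$, respectively $U_2$; equivalently, the incomplete basis of $K$ is precisely the set of product cells whose factors lie in the incomplete bases of $K_1$ and $K_2$. This is exactly the statement that, under the Künneth identification, the submatrix of $C(\tilde f^{\text{\rm simp}})$ indexed by the incomplete basis of $K$ is the tensor product of the submatrices indexed by the incomplete bases of $K_1$ and $K_2$, so its trace factors.

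Concretely, I would set up the double recurrence on $(\dim,\dim)$ of the two towers $\overline{K_1}\supset \overline{K_1}\setminus K_1\supset\cdots$ and $\overline{K_2}\supset \overline{K_2}\setminus K_2\supset\cdots$. At the top we know the product identity for the complete complexes $\varLambda_c(\tilde f,\overline K)=\varLambda_c(\tilde f_1,\overline{K_1})\cdot\varLambda_c(\tilde f_2,\overline{K_2})$ (classical Lefschetz multiplicativity, via Hopf's trace theorem applied to $\tilde f^{\text{\rm simp}}=\tilde f_1^{\text{\rm simp}}\times\tilde f_2^{\text{\rm simp}}$). Then one peels off layers: the difference $\varLambda^c(\tilde f^{\text{\rm simp}},K)$ is obtained by subtracting, over the product poset of layers, the contributions of the lower strata, each of which is again a product of an incomplete (or complete) stratum of the first tower with one of the second. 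Using bilinearity of the product formula at each level and the trace-of-tensor identity, the inclusion--exclusion collapses to $\varLambda^c(\tilde f^{\text{\rm simp}},K)=\varLambda^c(\tilde f_1^{\text{\rm simp}},K_1)\cdot\varLambda^c(\tilde f_2^{\text{\rm simp}},K_2)$, which is~\eqref{regla producto}.

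The main obstacle I expect is purely bookkeeping: making precise that the product triangulation (after the simplicial subdivision of $|K_1|\times|K_2|$) still has the property that an open cell of the subdivided product is ``in $U$'' exactly when it projects into $U_1$ and $U_2$, and that the matrix of the product simplicial map block-decomposes compatibly with the incomplete bases, so that the trace of the relevant submatrix really is the product of the traces of the factor submatrices. Once this compatibility of the incomplete basis with the product structure is established, everything else is the classical Künneth/Hopf computation plus the inclusion--exclusion recurrence already developed in the previous section; no genuinely new idea is needed, so I would present this carefully but compactly.
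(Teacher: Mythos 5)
Your proposal is correct and follows essentially the same route as the paper: reduce to the classical multiplicativity of the Lefschetz number on the complete product complex (via Hopf's trace theorem), then recover the incomplete case through the additive four-term stratification $\{A,\overline{A}\setminus A\}\times\{B,\overline{B}\setminus B\}$ and a downward recurrence on dimension. The one caveat is that your intermediate claim that the submatrix indexed by the incomplete basis of $K$ is literally a tensor product of the factor submatrices does not hold at the chain level after the subdivision that makes the product simplicial (the subdivided cells are not products of cells), but this is harmless because the recurrence you describe --- which is the paper's actual argument --- never uses it; it only needs the complete-complex product formula and the compatibility of the subdivision with the strata, which is exactly the bookkeeping point you correctly flag.
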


\begin{remark}
Although it seems to be very restrictive hypotheses to require that $g|_{U}$, $f_1|_{U_1}$, $f_2|_{U_2}$ remain homeomorphisms, note that they are locally satisfied in the case of bundles.

On the other hand, neither $\overline{U_1}$ nor $\overline{U_2}$ have to be simplicial complexes, so $\varLambda(f_1,U_1)_{\overline{U_1}}$ (resp. $U_2$) do not have to be defined. Actually, in~\eqref{regla producto} an abuse of notation is being committed, writing $\overline{U_1}$ instead of the complex to which the triangulation of $X$ leads.
\end{remark}

\begin{proof}
Let $(Y,\overline{K},K,\overline{A},A,\overline{B},B)$ be a triangulation compatible with $(X,\overline{U},U,\overline{U_1 },U_1,\overline{U_2},U_2)$. Since the following diagram commutes
\[\begin{tikzcd}
	{\overline{K}} && {\overline{K}} \\
	{\overline{U}} && {\overline{U}} \\
	{\overline{U_1}\times\overline{U_2}} && {\overline{U_1}\times\overline{U_2}} \\
	{\overline{A}\times\overline{B}} && {\overline{A}\times\overline{B}}
	\arrow[ from=2-1, to=1-1]
	\arrow["g"', from=2-1, to=3-1]
	\arrow["g", from=2-3, to=3-3]
	\arrow[from=3-3, to=4-3]
	\arrow[from=3-1, to=4-1]
	\arrow[from=2-3, to=1-3]
	\arrow["{\tilde{f}}", from=1-1, to=1-3]
	\arrow["f", from=2-1, to=2-3]
	\arrow["{f_1\times f_2}", from=3-1, to=3-3]
	\arrow["{\tilde{f_1}\times \tilde{f_2}}", from=4-1, to=4-3]
\end{tikzcd}\]
it follows that $\overline{K}\approx\overline{A}\times\overline{B}$ and $\tilde{f}\equiv \tilde{f_1}\times\tilde{f_2}$. Analogously, we see that $K\approx A\times B$ and $\tilde{f}|_{K}\equiv \tilde{f}_1|_{A}\times \tilde{f}_2|_ {B}$.

Note that $(\overline{K},K)$, $(\overline{A},A)$ and $(\overline{B},B)$ are triangulations of $(\overline{U},U) $, $(\overline{U_1},U_1)$ and $(\overline{U_2},U_2)$, respectively. Thus, $\varLambda(f,U)_X=\varLambda(\tilde{f}, K)_{\overline{K}}$.

Let us check then that $\varLambda(\tilde{f},K)_{\overline{K}}=\varLambda(\tilde{f}_1,A)_{\overline{A}}\cdot\varLambda(\tilde{f}_2,B)_{\overline{B}}$. For this purpose, we triangulate $(\overline{K},K)$ by $(\overline{A}\times\overline{B},A\times B)$, obtaining
\begin{equation*}
\varLambda(\tilde{f},K)_{\overline{K}}=\varLambda(\tilde{\tilde{f}},A\times B)_{\overline{A}\times\overline{B}}=\varLambda(\tilde{f}_1\times\tilde{f}_2,A\times B)_{\overline{A}\times\overline{B}}
\end{equation*}
We will then see that
\begin{equation*}
\varLambda(\tilde{f}_1\times \tilde{f}_2,A\times B)_{\overline{A}\times\overline{B}}=\varLambda(\tilde{f}_1,A)_{\overline{A}}\cdot \varLambda(\tilde{f}_2,B)_{\overline{B}}.
\end{equation*}
Now, in order to triangulate $(\overline{K},K)$ by $(\overline{A}\times\overline{B},A\times B)$,  we first need $\overline{A}\times\overline{B}$ to be a complex simplicial. This is achieved since both $\overline{A}$ and $\overline{B}$ are complete simplicial complexes, by taking as new vertices pairs consisting of barycenters of original simplices. Furthermore, since this procedure respects the incomplete subcomplexes of $\overline{A}$ and $\overline{B}$, we have 
\begin{multline}\label{formula 1}
\varLambda(\tilde{f}_1\times \tilde{f}_2,\overline{A}\times\overline{B})=\varLambda(\tilde{f}_1\times \tilde{f}_2,A\times B)_{\overline{A}\times\overline{B}}+\varLambda(\tilde{f}_1\times \tilde{f}_2,(\overline{A}\setminus A)\times B)_{\overline{A}\times\overline{B}}\\
{}+\varLambda(\tilde{f}_1\times \tilde{f}_2,A\times(\overline{B}\setminus B))_{\overline{A}\times\overline{B}}+\varLambda(\tilde{f}_1\times \tilde{f}_2,(\overline{A}\setminus A)\times(\overline{B}\setminus B))_{\overline{A}\times\overline{B}}\;.
\end{multline}
Now, from the product rule for the Lefschetz number: $\varLambda(\tilde{f}_1\times\tilde{f}_2,\overline{A}\times\overline{B})=\varLambda(\tilde{f}_1,\overline{A})\cdot\varLambda(\tilde{f}_2,\overline{B})$.

Note now that, since $\tilde{f}_1$, $\tilde{f}_1|_{A}$, $\tilde{f}_2$, $\tilde{f}_2|_{B}$ are homeomorphisms, we get that $\tilde{f}_1|_{(\overline{A}\setminus A)}$ and $\tilde{f}_2|_{(\overline{B}\setminus B)}$ are also homeomorphisms, and therefore $\tilde{f}_1|_{(\overline{A}\setminus A)}\times\tilde{f}_2|_{B}$, $\tilde{f}_1|_{A} \times\tilde{f}_2|_{(\overline{B}\setminus B)}$ and $\tilde{f}_1|_{(\overline{A}\setminus A)}\times \tilde{f} _2|_{(\overline{B}\setminus {B})}$ are homeomorphisms. Let us study for example $\varLambda(\tilde{f}_1\times\tilde{f}_2,(\overline{A}\setminus A)\times B)$. Note that the dimension of $(\overline{A}\setminus A)\times B$ is strictly smaller than that of $A\times B$. Furthermore, reproducing the argument used for $A\times B$ (since all of the hypotheses are met), we have 
\begin{multline*}
\varLambda(\tilde{f}_1\times\tilde{f}_2,\overline{\overline{A}\setminus A}\times\overline{B})=\varLambda(\tilde{f}_1\times\tilde{f}_2,(\overline{A}\setminus A)\times B)_{\overline{\overline{A}\setminus A}\times \overline{B}}\\
\begin{aligned}
&{}+\varLambda(\tilde{f}_1\times\tilde{f}_2,((\overline{\overline{A}\setminus A})\setminus (\overline{A}\setminus A))\times B)_{\overline{\overline{A}\setminus A}\times \overline{B}}+\varLambda(\tilde{f}_1\times\tilde{f}_2,(\overline{A}\setminus A)\times(\overline{B}\setminus B))_{\overline{\overline{A}\setminus A}\times \overline{B}}\\
&{}+\varLambda(\tilde{f}_1\times\tilde{f}_2, ((\overline{\overline{A}\setminus A})\setminus (\overline{A}\setminus A))\times(\overline{B}\setminus B))_{\overline{\overline{A}\setminus A}\times \overline{B}}\;.
\end{aligned}
\end{multline*}
By repeating this process, the last three terms eventually are of dimension zero, and therefore they are complete complexes, for which we can apply the product formula. As an illustrative example, suppose that $((\overline{\overline{A}\setminus A})\setminus (\overline{A}\setminus A))\times B$, $(\overline{A}\setminus  A)\times (\overline{B}\setminus B)$ and $((\overline{\overline{A}\setminus A})\setminus (\overline{A}\setminus A))\times (\overline{B}\setminus  B)$ are of dimension zero. Then
\begin{multline*}
\varLambda(\tilde{f}_1,\overline{\overline{A}\setminus A})\cdot\varLambda(\tilde{f}_2,\overline{B})=\varLambda(\tilde{f}_1\times\tilde{f}_2,(\overline{A}\setminus A)\times B)+\varLambda(\tilde{f}_1,(\overline{\overline{A}\setminus A})\setminus (\overline{A}\setminus A))\cdot\varLambda(\tilde{f}_2,B)\\
{}+\varLambda(\tilde{f}_1,\overline{A}\setminus A)\cdot \varLambda(\tilde{f}_2,\overline{B}\setminus B)+\varLambda(\tilde{f}_1,(\overline{\overline{A}\setminus A})\setminus (\overline{A}\setminus A))\cdot\varLambda(\tilde{f}_2,\overline{B}\setminus B)\;.
\end{multline*}
But, on the other hand, 
\begin{multline*}
\varLambda(\tilde{f}_1,\overline{\overline{A}\setminus A})\cdot\varLambda(\tilde{f}_2,\overline{B})\\
\begin{aligned}
&=\big[\varLambda(\tilde{f}_1,\overline{A}\setminus A)+\varLambda(\tilde{f}_1,(\overline{\overline{A}\setminus A})\setminus (\overline{A}\setminus A))\big]\cdot\big[\varLambda(\tilde{f}_2,B)+\varLambda(\tilde{f}_2,\overline{B}\setminus B)\big]\\
&=\varLambda(\tilde{f}_1,\overline{A}\setminus A)\cdot\varLambda(\tilde{f}_2,B)+\varLambda(\tilde{f}_1,\overline{A}\setminus A)\cdot\varLambda(\tilde{f}_2,\overline{B}\setminus B)\\
&{}+\varLambda(\tilde{f}_1,(\overline{\overline{A}\setminus A})\setminus (\overline{A}\setminus A))\cdot\varLambda(\tilde{f}_2,B)
+\varLambda(\tilde{f}_1,(\overline{\overline{A}\setminus A})\setminus (\overline{A}\setminus A))\cdot\varLambda(\tilde{f}_2,\overline{B}\setminus B)\;.
\end{aligned}
\end{multline*}
Now, combining both equalities, we get
\begin{equation*}
\varLambda(\tilde{f}_1\times\tilde{f}_2,(\overline{A}\setminus A)\times B)_{\overline{\overline{A}\setminus A}\times \overline{B}}=\varLambda(\tilde{f}_1,\overline{A}\setminus A)_{\overline{\overline{A}\setminus A}}\cdot\varLambda(\tilde{f}_2,B)_{\overline{B}}=\varLambda(\tilde{f}_1,\overline{A}\setminus A)_{\overline{A}}\cdot\varLambda(\tilde{f}_2,B)_{\overline{B}}\;,
\end{equation*}
where the last equality follows from $\overline{\overline{A}\setminus A}\subset \overline{A}$.

In this way, if we go up in dimension until we reach~\eqref{formula 1}, we have
\begin{multline*}
\varLambda(\tilde{f}_1,\overline{A})\cdot\varLambda(\tilde{f}_2,\tilde{B})=\varLambda(\tilde{f}_1\times\tilde{f}_2,A\times B)_{\overline{A}\times\overline{B}}+\varLambda(\tilde{f}_1,\overline{A}\setminus A)_{\overline{A}}\cdot\varLambda(\tilde{f}_2,B)_{\overline{B}}\\
{}+\varLambda(\tilde{f}_1,A)_{\overline{A}}\cdot\varLambda(\tilde{f}_2,\overline{B}\setminus B)_{\overline{B}}+\varLambda(\tilde{f}_1,\overline{A}\setminus A)_{\overline{A}}\cdot\varLambda(\tilde{f}_2,\overline{B}\setminus B)_{\overline{B}}\;,
\end{multline*}
and, again, decomposing $\varLambda(\tilde{f}_1,\overline{A})\cdot\varLambda(\tilde{f}_2,\overline{B})$ into four terms, we obtain
\begin{equation*}
    \varLambda(\tilde{f}_1\times\tilde{f}_2,A\times B)_{\overline{A}\times\overline{B}}=\varLambda(\tilde{f}_1,A)_{\overline{A}}\cdot\varLambda(\tilde{f}_2,B)_{\overline{B}}\;.\qedhere
\end{equation*}
\end{proof}

\begin{example}
    Let us consider the hollow simplicial complex $X$ shown in Figure \ref{figure product}. Let $U$ be the incomplete subcomplex resulting from multiplying the open subcomplex $U_1=\{[d_1],[d_2],[d_3],[d_1,d_3],[d_2,d_3],[d_1,d_2,d_3]\}$ by the open subcomplex $U_2=\{[b_3],[c_3],[a_3,b_3],[b_3,c_3],[c_3,d_3]\}$. Let now $f:X\rightarrow X$ be the map resulting from turning $X$ $180$ degrees over the axis $[a_3,d_3]$. It is a simplicial map. Moreover, subcomplex $U$ is $f$-invariant and $f\equiv f_1\times f_2$, where $f_1$ is the reflection over the vertex $d_3$ and $f_2$ is the reflection over the midpoint of the segment. If we compute now $\varLambda(f,U)_X$, $\varLambda(f_1, U_1)_{\overline{U_1}}$ and $\varLambda(f_2, U_2)_{\overline{U_2}}$ we obtain:
    \begin{equation*}
        \varLambda(f,U)_X=1=1\cdot 1 =\varLambda(f_1, U_1)_{\overline{U_1}}\cdot \varLambda(f_2, U_2)_{\overline{U_2}}.
    \end{equation*}
\end{example}
\begin{figure}[htb]
    \centering
     \includegraphics[scale=0.6]{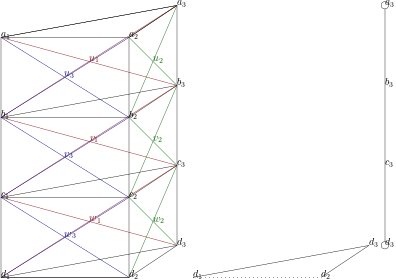} 
     \caption{Simplicial complex.}
     \label{figure product}
\end{figure}
\subsection{Fixed point property for the combinatorial Lefschetz number}
Once the combinatorial Lefschetz number is defined, we can obtain a fixed point theorem that generalizes the fixed point theorem of the Lefschetz number. 

\begin{theorem}[Generalised fixed point theorem]\label{thm:fixed_point}
Let $X$ be a complete and finite simplicial complex, and let $f:X\rightarrow X$ be a homeomorphism. Suppose $U\subset X$ is a definable and $f$-invariant subset. If $\varLambda(f,U)_X\neq 0$, then $f$ has a fixed point in $\overline{U}$.
\end{theorem}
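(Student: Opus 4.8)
The plan is to reduce the statement to the classical Lefschetz fixed point theorem applied to $\overline{U}$ (or rather to the complete subcomplex $\overline{K}$ of a compatible triangulation). By the well-definedness results (properties \ref{(a)}, \ref{(b)}, \ref{(c)}) we may pass to a triangulation $(L,\overline{K},K)$ of $X$ compatible with $(X,\overline{U},U)$, and we have $\varLambda(f,U)_X=\varLambda(f,U)_{\overline{U}}=\varLambda_c(\tilde f,K)$, where $\tilde f\colon L\to L$ is induced by $f$ and $\tilde f|_{\overline K}\colon \overline K\to\overline K$ is a homeomorphism of the complete subcomplex $\overline{K}$. The key observation is the additive decomposition
\begin{equation*}
\varLambda(\tilde f,\overline K)=\varLambda^c(\tilde f^{\text{\rm simp}},K)+\varLambda^c(\tilde f^{\text{\rm simp}},\overline K\setminus K),
\end{equation*}
which, together with the same decomposition applied recursively to $\overline{\overline K\setminus K}$, $\overline{(\overline{\overline K\setminus K})\setminus(\overline K\setminus K)}$, and so on down to dimension zero, expresses the classical Lefschetz number $\varLambda(\tilde f,\overline K)$ as a finite sum of terms of the form $\varLambda_c(\tilde f,K_j)$, one of which is $\varLambda(f,U)_X$ itself and each of the others is the combinatorial Lefschetz number of a homeomorphism of a strictly lower-dimensional complete complex restricted to a definable invariant subset.

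From here I would argue by induction on $\dim X$. In the base case ($\dim X=0$) everything is a finite set of points, $\overline U=U$, and $\varLambda(f,U)_X\neq 0$ forces $\tilde f$ to fix at least one vertex of $K$, which is a point of $\overline U$. For the inductive step: if $\varLambda(f,U)_X\neq 0$, look at the displayed decomposition of $\varLambda(\tilde f,\overline K)$. If $\varLambda(\tilde f,\overline K)\neq 0$, then by the classical Lefschetz fixed point theorem (applicable because $\overline K=|{\overline K}|$ is a compact polyhedron and $\tilde f|_{\overline K}$ is continuous, indeed a homeomorphism) $\tilde f$ has a fixed point in $\overline K$, i.e.\ $f$ has a fixed point in $\overline U$, and we are done. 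If instead $\varLambda(\tilde f,\overline K)=0$, then since $\varLambda(f,U)_X\neq 0$ some other summand in the decomposition must be nonzero; that summand is $\varLambda(\tilde g,K')_{\overline{K'}}$ for a homeomorphism $\tilde g$ of a complete complex $\overline{K'}$ with $\dim\overline{K'}<\dim X$ and $K'$ a definable $\tilde g$-invariant subset, so by the inductive hypothesis $\tilde g$ — hence $\tilde f$ — has a fixed point in $\overline{K'}\subset\overline K$, which again yields a fixed point of $f$ in $\overline U$.

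The main obstacle, and the point that needs to be stated carefully, is bookkeeping in the recursive decomposition: one must check that each intermediate complex $\overline{(\overline{\overline K\setminus K})\setminus(\overline K\setminus K)}\cdots$ is again complete and $\tilde f$-invariant (this is exactly what was verified in Remark \ref{invariancia frontera} and used in Lemmas \ref{primer lema} and \ref{segundo lema}), so that the induction on dimension applies to each term, and that every such term is genuinely of strictly smaller dimension so the recursion terminates. A secondary point is to make sure the fixed point produced always lies in $\overline U$ rather than merely in $X$: this is automatic since every complex appearing in the decomposition is a subcomplex of $\overline K=g(\overline U)$, and $g$ is a homeomorphism. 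I would also remark that the hypothesis that $X$ is complete is used to guarantee $\overline K$ is a genuine (complete) simplicial complex, so that the classical Lefschetz theorem applies to it; $X$ itself need not satisfy $\varLambda(\tilde f,X)\neq 0$.
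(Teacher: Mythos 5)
Your proof is correct, but it takes a genuinely different route from the paper's. The paper argues by contraposition, directly adapting the classical simplicial-approximation argument: if $f$ had no fixed point in $\overline{U}$, compactness gives an $\epsilon>0$ with $d(x,f(x))\ge\epsilon$ there; after a subdivision of mesh much smaller than $\epsilon$, a simplicial approximation carries no simplex of the incomplete basis of $K$ into itself, so every diagonal entry of the restricted matrices vanishes and $\varLambda(f,U)_X=0$. You instead use the classical Lefschetz fixed point theorem as a black box and reduce to it via the additive decomposition $\varLambda(\tilde f,\overline K)=\varLambda^c(\tilde f^{\mathrm{simp}},K)+\varLambda^c(\tilde f^{\mathrm{simp}},\overline K\setminus K)$ together with induction on dimension --- exactly the recursion already set up in Lemmas \ref{primer lema} and \ref{segundo lema}. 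The bookkeeping you flag (completeness and $\tilde f$-invariance of $\overline{\overline K\setminus K}$, the strict drop in dimension of $\overline K\setminus K$ below that of $K$, and the fixed point landing inside $\overline K\cong\overline U$) is indeed the content of Remark \ref{invariancia frontera} and of the recurrence in Lemma \ref{primer lema}, so your induction closes. Your route has the advantage of recycling machinery already established and of invoking the classical theorem only on genuine compact polyhedra; the paper's route is more self-contained, avoids the case distinction on whether $\varLambda(\tilde f,\overline K)$ vanishes, and in effect proves the slightly sharper statement that the mere absence of fixed points on $\overline U$ forces $\varLambda(f,U)_X=0$ --- though the paper only sketches this. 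Both arguments are valid.
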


\begin{proof}
We only sketch the idea of the proof since it is analogous to the classical one. 
If $f$ doesn't have any fixed point in $X$ we could take a sufficiently small distance $\epsilon$ for which any $\epsilon$-ball in $X$ would not have any fixed point.
Taking a small barycentric subdivision (of diameter quite smaller than $\epsilon$) and a simplicial approximation $f^{\mathrm{simp}}$ of $f$, $f^{\mathrm{simp}}$ will not carry any simplex of the barycentric subdivision into itself, and so so the combinatorial Lefschetz number will be zero.
\end{proof}

We present two examples of the fixed point property of the combinatorial Lefschetz number. In the first one is interesting because shows an example of a map that has a fixed point only in the boundary of the definable set studied. On the other hand, the second example presents a case where we will not be able to apply the traditional Lefschetz fixed point because it is zero.

\begin{example}\label{ex:fixed_point_1}
In the first one, we have the complex $I=[0,1]$ and the map $f(t)=t^2$. If we consider $U=\mathring{I}$, we see that $\varLambda(f,U)_I=-1\neq 0$ but f only has fixed points in the boundary of the interval.
\end{example}

\begin{figure}[htb] 
    \centering
     \includegraphics[scale=0.6]{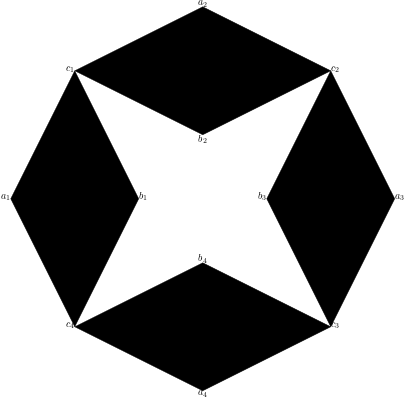} 
     \caption{Simplicial complex $X$.}
     \label{figure:fixed_point}
\end{figure}

\begin{example}\label{ex:fixed_point_2}
Next we consider the complex $X$ shown in Figure~\ref{figure:fixed_point} and the simplicial map $f$ that exchanges each $a_i$ and $b_i$ and leaves fixed the $c_i$ for each $i=1,2,3,4$, then $\varLambda(f,X)=0$. However, if we consider now the $f$-invariant open subcomplex $U=X\setminus \{c_1, c_2, c_3, c_4\}$, we see that $\varLambda(f,U)_X=-4$ and so Theorem \ref{thm:fixed_point} guarantees the existence of a fixed point in $X$.
\end{example}

\section{Integral with respect to the Lefschetz number}

We have already defined a combinatorial Lefschetz number, which, in a sense, is invariant by homeomorphisms (if they can be extended to the closure of the space). Let us now define an integral with respect to this number.

\begin{definition}\label{def integral}
Let $f:X\rightarrow X$ be a homeomorphism of a simplicial complex to itself. Let $U_1, \ldots, U_n$ be sets in the algebra of definable and $f$-invariant sets. We define 
\begin{equation*}
\int_{X}(c_1\mathds{1}_{U_1}+ \ldots +c_n\mathds{1}_{U_n})\,d\varLambda f=c_1\varLambda(f,U_1)_X+ \ldots +c_n\varLambda(f,U_n)_X\;.
\end{equation*}
Likewise, we will say that a function $h:X\rightarrow \mathbb{Z}$ is $f$-\textit{integrable} if $h$ admits an expression of the form $h=\sum _{j=1}^n d_j\mathds{1}_{U_j}$, with the $U_j$ in the mentioned algebra of sets.
\end{definition}

\begin{theorem}\label{thm:well_defined_integration}
The integral with respect to the Lefschetz number is well defined. Furthermore, it satisfies 
\begin{equation*}
\int_X h \,d\varLambda f=\sum_{k\in \mathbb{Z}}\varLambda(f,\{h=k\})_X\;,
\end{equation*}
where $\{h=k\}\coloneqq \{x\in X : h(x)=k\}$.
\end{theorem}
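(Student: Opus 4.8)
The statement has two parts: well-definedness of the integral (independence of the representation $h=\sum_j d_j\mathds{1}_{U_j}$), and the level-set formula. I would address well-definedness first. The integral is defined on the free abelian group generated by the indicator functions $\mathds{1}_U$ for $U$ in the fixed algebra $\mathscr{B}$ of definable $f$-invariant sets, so the task is to check that the assignment $\mathds{1}_U \mapsto \varLambda(f,U)_X$ descends to a well-defined map on the subgroup of $\mathbb{Z}^X$ spanned by these indicators. Equivalently: whenever $\sum_j d_j \mathds{1}_{U_j} = 0$ pointwise, then $\sum_j d_j \varLambda(f,U_j)_X = 0$. The clean way to see this is to reduce an arbitrary such relation to the elementary inclusion-exclusion relations $\mathds{1}_{U\cup V} = \mathds{1}_U + \mathds{1}_V - \mathds{1}_{U\cap V}$, which are respected by $\varLambda(f,-)_X$ precisely by the additivity/inclusion-exclusion property recorded just after Theorem~\ref{regla del producto}. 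Concretely, given finitely many sets $U_1,\dots,U_n\in\mathscr{B}$, refine them into the finite collection of \emph{atoms} obtained by taking all intersections $\bigcap_{j\in S} U_j \cap \bigcap_{j\notin S}(X\setminus U_j)$; each such atom lies in $\mathscr{B}$ (it is definable and $f$-invariant since $f$ is a homeomorphism, so complements of $f$-invariant definable sets are again such — this was noted in Remark~\ref{invariancia frontera}), each $U_j$ is a disjoint union of atoms, and by finite additivity $\varLambda(f,U_j)_X$ is the sum of $\varLambda(f,A)_X$ over the atoms $A\subset U_j$. Rewriting $\sum_j d_j\mathds{1}_{U_j}$ in the atom basis, the coefficient of each atom $A$ is $\sum_{j: A\subset U_j} d_j$, which is exactly the common value of $\sum_j d_j \mathds{1}_{U_j}$ on $A$; hence if the function is zero (or more generally equals $h$), all these atomwise coefficients vanish (resp. equal the value of $h$ on $A$), and therefore $\sum_j d_j\varLambda(f,U_j)_X = \sum_A\big(\sum_{j:A\subset U_j} d_j\big)\varLambda(f,A)_X$ depends only on $h$. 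This simultaneously proves well-definedness and gives a canonical formula $\int_X h\,d\varLambda f = \sum_A h(A)\,\varLambda(f,A)_X$ where $A$ ranges over the atoms of any generating family for $h$.

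\textbf{The level-set formula.} Given that canonical description, the level-set formula is almost immediate. Fix an $f$-integrable $h$ with a representation over sets $U_1,\dots,U_n\in\mathscr{B}$ and let $\{A_i\}$ be the corresponding atoms. Since $h$ is constant on each atom, each level set $\{h=k\}$ is the disjoint union of those atoms $A_i$ on which $h$ takes the value $k$; moreover $\{h=k\}$ itself lies in $\mathscr{B}$, being a finite Boolean combination of the $U_j$ (hence definable) and $f$-invariant because $h$ is constant on $f$-orbits within each atom — more directly, $f(\{h=k\})=\{h=k\}$ follows from $f$ permuting the atoms trivially, i.e. mapping each atom into itself, which is forced by $f$-invariance of every $U_j$ and every $X\setminus U_j$. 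Only finitely many $k$ give a nonempty level set, so the sum $\sum_{k\in\mathbb{Z}}\varLambda(f,\{h=k\})_X$ is finite, and by finite additivity it equals $\sum_k\sum_{i:\,h(A_i)=k}\varLambda(f,A_i)_X = \sum_i h(A_i)\,\varLambda(f,A_i)_X$, which is exactly $\int_X h\,d\varLambda f$ by the canonical formula from the previous paragraph. Writing $h=\sum_k k\,\mathds{1}_{\{h=k\}}$ as a distinguished representation and invoking well-definedness is an equivalent phrasing of the same argument.

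\textbf{Main obstacle.} The one genuine point requiring care is the claim that $f$ maps each atom $A = \bigcap_{j\in S}U_j\cap\bigcap_{j\notin S}(X\setminus U_j)$ into itself, equivalently that each atom is $f$-invariant — this is what makes $\varLambda(f,A)_X$ defined and makes the additivity relation applicable. This follows from the fact, extracted in Remark~\ref{invariancia frontera}, that $f$ being a homeomorphism forces $X\setminus U_j$ to be definable and $f$-invariant whenever $U_j$ is, together with the evident fact that a finite intersection of $f$-invariant sets is $f$-invariant; definability of atoms is clear since $\mathscr{A}_n$ is an algebra of sets. I would also double-check that the additivity identity stated in the excerpt, which is phrased for two sets, extends by an easy induction to finite disjoint unions $\varLambda(f,\bigsqcup_i A_i)_X = \sum_i\varLambda(f,A_i)_X$ — this is routine and needs the $A_i$ and their unions all to be $f$-invariant definable sets, which the atom construction guarantees. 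Beyond this, the proof is purely formal bookkeeping in the Boolean algebra $\mathscr{B}$, so I expect no serious difficulty; the substance was already done in establishing that $\varLambda(f,-)_X$ is a well-defined, finitely additive, inclusion-exclusion-respecting set function on $\mathscr{B}$.
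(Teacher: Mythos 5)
Your proposal is correct and follows essentially the same route as the paper: refine the $U_j$ into a disjoint family of definable $f$-invariant pieces of the Boolean algebra (your atoms correspond to the successive intersections and differences the paper lists and makes a triangulation compatible with), apply finite additivity of $\varLambda(f,-)_X$, and observe that the resulting coefficient on each piece is determined by the pointwise values of $h$, which gives both well-definedness and the level-set formula at once. The only cosmetic difference is that the paper justifies the additivity step by passing to a common compatible triangulation and working with $\varLambda^c(\tilde{f}^{\text{\rm simp}},K_\mu)$, whereas you invoke the already-recorded inclusion-exclusion property directly.
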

\begin{proof}
Let $g=\sum _{i=1}^n c_i\mathds{1}_{U_i}= \sum _{j=1}^m d_j\mathds{1}_{V_j}$, with $U_i$ and $V_j$ in the conditions of Definition~\ref{def integral}. Let us check that
\begin{equation*}
\sum_{i=1}^n c_i\varLambda(f,U_i)_X=\sum_{j=1}^m d_j\varLambda(f,V_j)_X\;.
\end{equation*}
Let $W_\zeta=U_\zeta$ if $1\le\zeta\le n$ and $W_\zeta=V_{\zeta-n}$ if $n+1\le\zeta\le n+m$. Let us now consider a triangulation $L$ of $X$ compatible with every $W_{\zeta}$ and with 
\begin{equation}\label{conjuntos teor integral}
\bigcap_{\zeta=1}^{n+m}W_{\zeta}\;\quad
\bigcap_{\zeta\neq \zeta_1}W_{\zeta}\setminus \bigcap_{\zeta=1}^{n+m}W_{\zeta}\;,\quad
\bigcap_{\zeta\neq \zeta_2}W_{\zeta}\setminus \bigcap_{\zeta=1}^{n+m}W_{\zeta}\setminus \Big(\Big(\bigcap_{\zeta\neq \zeta_1}W_{\zeta}\Big)\setminus \Big(\bigcap_{\zeta=1}^{n+m}W_{\zeta}\Big)\Big)\;,\quad\ldots
\end{equation}
(Actually, since $f$ is bijective, it is enough for the triangulation to be compatible with the $W_{\zeta}$.)

Now, these last sets belong to the algebra of definable and $f$-invariants sets, and, furthermore, each $W_{\zeta}$ can be written as a disjoint union of some of them. Let $\{K_\mu\}_{\mu\in M}$ denote the images of the $W_{\zeta}$ and the sets of~\eqref{conjuntos teor integral} by the triangulation. So,
\begin{align*}
\sum_{i=1}^n c_i \varLambda(f,U_i)_X&=\sum_{i=1}^n c_i\varLambda^c(\tilde{f}^{\text{\rm simp}},K_i)=\sum_{i=1}^n c_i\Big(\sum_{\mu\in M_i} \varLambda^c(\tilde{f}^{\text{\rm simp}},K_{\mu})\Big)\\
&=\sum_{\mu\in M} \varLambda^c(\tilde{f}^{\text{\rm simp}},K_{\mu})\cdot\Big(\sum_{i:\mu\in M_i} c_i\Big)\;,
\end{align*}
where $M_i=\{\mu\in M :K_{\mu}\subset K_i\}$.

Analogously, we get
\begin{equation*}
\sum_{j=1}^m d_j \varLambda(f,V_j)_X=\sum_{\mu\in M}\varLambda^c(\tilde{f}^{\text{\rm simp}},K_{\mu})\cdot
\Big(\sum_{j:\mu\in M_j}d_j\Big)\;.
\end{equation*}
Now, since $\sum_{i=1}^n c_i\mathds{1}_{U_i}=\sum_{j=1}^m d_j\mathds{1}_{V_j}$, we obtain, for all $\mu\in M$,
\begin{equation*}
    \sum_{i:\mu\in M_i}c_i=\sum_{j:\mu\in M_j}d_j\;.
\end{equation*}
Thus, the integral with respect to the Lefschetz number is well defined. The second part of the proof is a straightforward verification.
\end{proof}

\section{Product rule and Fubini theorem}

At this point, we are interested in obtaining applications of the Lefschetz number that we have just defined. First of all, we will try to take advantage of the measure defined in the Definition~\ref{def integral}. Precisely, first we will obtain a combinatorial Lefschetz number rule for the product of definable sets. Once obtained this rule, we will state and prove a Fubini Theorem, the generalization in measure language of the product rule for the case of triangulable bundles, in a similar way as in \cite[Theorem~4.5]{Curry}.

\subsection{Fubini Theorem}
Before proving Fubini's theorem, for our theory of integration, we need two previous lemmas.

\begin{lemma}\label{lema del entorno}
   Let $(X,p,B)$ be a fiber bundle with $B$ a simplicial complex. Then, for each $b\in B$, there exists a definable open neighborhood $B_j\subset B$ such that $\overline{p^{-1}(B_j)}\approx\overline{B_j}\times F$. 
\end{lemma}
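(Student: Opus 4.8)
The plan is to use the local triviality of the fiber bundle together with the definable triangulation theorem. Since $(X,p,B)$ is a fiber bundle with typical fiber $F$, for each $b \in B$ there is an open neighborhood $V$ of $b$ in $B$ and a homeomorphism $\varphi \colon p^{-1}(V) \to V \times F$ commuting with the projections. The issue is that $V$ need not be definable, and $\overline{p^{-1}(V)}$ need not split as a product over $\overline{V}$. To fix this, I would first choose $V$ small enough that its closure $\overline{V}$ is contained in an open simplex's star, or more precisely, work with the triangulation of $B$.

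First I would fix a simplicial structure on $B$ (which we have, since $B$ is a simplicial complex) and, for the given $b$, let $\sigma$ be the open simplex containing $b$ and consider the open star $\mathrm{St}(v)$ of a vertex $v$ of $\overline{\sigma}$, or better, pick a small definable open ball $B_j$ around $b$ whose closure $\overline{B_j}$ is contained in a single trivializing neighborhood $V$ and is homeomorphic to a closed simplex (a closed ball). Since o-minimal structures contain the semilinear sets and balls are semialgebraic hence definable, such a $B_j$ exists and is definable. Then $\overline{B_j} \subset V$, so the trivialization $\varphi$ restricts to a homeomorphism $p^{-1}(\overline{B_j}) \approx \overline{B_j} \times F$ commuting with projections.

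Next I would verify that $\overline{p^{-1}(B_j)} = p^{-1}(\overline{B_j})$. The inclusion $\overline{p^{-1}(B_j)} \subset p^{-1}(\overline{B_j})$ holds because $p$ is continuous and $p^{-1}(\overline{B_j})$ is closed. For the reverse inclusion, one uses that $p$ restricted to $p^{-1}(V)$ is, via $\varphi$, just the projection $V \times F \to V$, which is an open map; hence $p^{-1}(B_j) = \varphi^{-1}(B_j \times F)$ is dense in $\varphi^{-1}(\overline{B_j} \times F) = p^{-1}(\overline{B_j})$ as long as $\overline{B_j} = \overline{B_j \cap V}$ within $V$, which holds since $B_j$ is open with $\overline{B_j} \subset V$. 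Combining, $\overline{p^{-1}(B_j)} = p^{-1}(\overline{B_j}) \approx \overline{B_j} \times F$, which is the claim. (One should also note $B_j$ is definable and, since $p$ is a definable map, $p^{-1}(B_j)$ and its closure are definable, so the statement makes sense within our algebra of sets.)

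The main obstacle is the interplay between taking closures and the bundle projection: a priori the closure of $p^{-1}(B_j)$ in $X$ could be strictly larger than $p^{-1}(\overline{B_j})$ if $F$ fails to be compact, or could behave badly if $\overline{B_j}$ leaves the trivializing neighborhood. This is why it is essential to choose $B_j$ with $\overline{B_j}$ compactly contained in $V$ — here the fact that $B$ is a (finite) simplicial complex, hence locally compact, guarantees we can shrink $B_j$ so that $\overline{B_j}$ is a compact subset of $V$, and the density argument above then goes through using that the projection $\overline{B_j} \times F \to \overline{B_j}$ is a closed-base product so $\overline{B_j \times F} = \overline{B_j} \times F$ when $B_j$ is open in $\overline{B_j}$. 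With $\overline{B_j}$ compact and contained in $V$, no points of $X$ outside $p^{-1}(\overline{B_j})$ can be limits of points of $p^{-1}(B_j)$, completing the proof.
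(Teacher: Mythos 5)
Your argument is correct and follows essentially the same route as the paper's proof: restrict a local trivialization to a definable open neighborhood $B_j$ whose closure lies inside the trivializing set, and then use the product structure to identify $\overline{p^{-1}(B_j)}$ with $p^{-1}(\overline{B_j})\approx\overline{B_j}\times F$. You in fact supply more detail than the paper on the equality $\overline{p^{-1}(B_j)}=p^{-1}(\overline{B_j})$, which the paper asserts without the density argument.
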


\begin{proof}
Since $X\equiv(X,p,B)$ is a fiber bundle, we know that there exists an open neighborhood $U$ of $p$ such that $p^{-1}(U)\approx U\times F$. Let $B_j\subset U$ be the intersection of an open cell with $B$ (in particular, a definable subset) of $B$ such that its closure is contained in $U$ (its existence is given by normality). Thus, since $p^{-1}(\overline{B_j})=\overline{p^{-1}(B_j)}$ because  $X$ is a fiber bundle, it follows that
\begin{equation*}
\overline{p^{-1}(B_j)}\approx \overline{B_j}\times F \;,\quad p^{-1}(B_j)\approx B_j\times F\;.\qedhere
\end{equation*}
\end{proof}

\begin{lemma}\label{lema 3.6}
Let $(X,p,B)$ be a fiber bundle with typical fiber $F$, where $p$ is definable, and $X$, $F$ and $B$ are complete simplicial complexes. Let $A=p^{-1}(B_j)$ so that $\overline{p^{-1}(B_j)}\overset{g}{\approx}\overline{B_j}\times F$ and $ p^{-1}(B_j)\overset{g}{\approx}B_j\times F$. Let $h:X\rightarrow\mathbb{Z}$ be defined as $h=\mathds{1}_A$. Let $l:X\rightarrow X$ be a homeomorfism such that $\overline{A}$ and $A$ are $l$-invariant and $l|_{\overline{A}}:\overline{A}\overset{\approx}{\rightarrow}\overline{A}$ and $l|_{A}:A\overset{\approx}{\rightarrow} A$, with $l|_{\overline{A}}\equiv l_1\times l_2$, $l_2:F\overset{\approx}{\rightarrow}F$, $l_1:\overline{B_j}\overset{\approx}{\rightarrow}\overline{B_j}$ and $l_{1|B_j}:B_j\overset{\approx}{\rightarrow} B_j$. Then 
\begin{equation}\label{int_X h d varLambda l}
    \int_X h\,d\varLambda l= \int_Y \bigg(\int_{p^{-1}(y)}h\,d\varLambda(\mathrm{id}\times l_2)\bigg)d\varLambda l_1\;.
\end{equation}
\end{lemma}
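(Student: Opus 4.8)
The plan is to reduce the identity~\eqref{int_X h d varLambda l} to the product property of Theorem~\ref{regla del producto} (the product rule) by unwinding both sides in terms of combinatorial Lefschetz numbers of the relevant invariant pieces. First I would compute the right-hand side. Since $h=\mathds{1}_A$ with $A=p^{-1}(B_j)$, for a fixed $y\in Y=B$ the restriction $h|_{p^{-1}(y)}$ is either $\mathds{1}_{p^{-1}(y)}$ (when $y\in B_j$) or $0$ (when $y\notin B_j$). Using the identification $g\colon \overline{A}\approx \overline{B_j}\times F$ and $g\colon A\approx B_j\times F$, together with $l|_{\overline{A}}\equiv l_1\times l_2$, the inner integral $\int_{p^{-1}(y)} h\,d\varLambda(\mathrm{id}\times l_2)$ equals $\varLambda(\mathrm{id}\times l_2, \{y\}\times F)$ restricted appropriately, which by the definition of the integral (Definition~\ref{def integral}) and by the product property equals $\varLambda(\mathrm{id},\{y\})\cdot\varLambda(l_2,F) = \varLambda(l_2,F)_F$ when $y\in B_j$ and $0$ otherwise; in other words the inner integral is the function $\varLambda(l_2,F)_F\cdot\mathds{1}_{B_j}$ on $Y$. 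Then the outer integral becomes $\int_Y \varLambda(l_2,F)_F\cdot\mathds{1}_{B_j}\,d\varLambda l_1 = \varLambda(l_2,F)_F\cdot\varLambda(l_1,B_j)_{\overline{B_j}}$, again by Definition~\ref{def integral}.

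Next I would compute the left-hand side directly. By Definition~\ref{def integral}, $\int_X h\,d\varLambda l = \varLambda(l,A)_X = \varLambda(l, p^{-1}(B_j))_X$. Since $A$ and $\overline{A}$ are $l$-invariant and $l|_{\overline{A}}$ is a homeomorphism, the invariance results established for the well-definedness of the combinatorial Lefschetz number give $\varLambda(l,A)_X = \varLambda(l|_{\overline{A}}, A)_{\overline{A}}$. Now invoke Lemma~\ref{lema del entorno} and the hypotheses: $\overline{A}\overset{g}{\approx}\overline{B_j}\times F$, $A\overset{g}{\approx}B_j\times F$, $l|_{\overline{A}}\equiv l_1\times l_2$ and $l|_A\equiv l_1|_{B_j}\times l_2$, where $l_1$, $l_1|_{B_j}$, $l_2$ are all homeomorphisms. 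These are exactly the hypotheses of Theorem~\ref{regla del producto} with $U=A$, $U_1=B_j$, $U_2=F$, $f_1=l_1$, $f_2=l_2$. Hence $\varLambda(l,A)_X = \varLambda(l_1,B_j)_{\overline{B_j}}\cdot\varLambda(l_2,F)_F$. Comparing with the computation of the right-hand side completes the proof.

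The main obstacle I expect is the careful treatment of the inner integral on fibers over points $y\notin B_j$: there $p^{-1}(y)$ need not split as $\{y\}\times F$ compatibly with $\mathrm{id}\times l_2$, and $h$ restricted to such a fiber is identically $0$, so $\int_{p^{-1}(y)} h\,d\varLambda(\mathrm{id}\times l_2)=\varLambda(\mathrm{id}\times l_2,\varnothing)=0$; one must make sure the function $y\mapsto \int_{p^{-1}(y)} h\,d\varLambda(\mathrm{id}\times l_2)$ is genuinely $f$-integrable on $Y$ (here it is simply a constant multiple of $\mathds{1}_{B_j}$, and $B_j$ is definable and — since $B_j$ is $l_1$-invariant by hypothesis — lies in the relevant algebra of sets), so that the outer integral $\int_Y(\,\cdot\,)\,d\varLambda l_1$ makes sense. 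A secondary technical point, already flagged in the remark after Theorem~\ref{regla del producto}, is that $\overline{B_j}$ and $F$ are handled via the triangulation of $X$ rather than as honest ambient simplicial complexes, so all equalities of the form $\varLambda(l_2,F)_F$ should be read as equalities of the corresponding combinatorial Lefschetz numbers of the triangulated pieces, exactly as in the proof of Theorem~\ref{regla del producto}; once this bookkeeping is in place, both sides reduce to $\varLambda(l_1,B_j)_{\overline{B_j}}\cdot\varLambda(l_2,F)_F$ and the lemma follows.
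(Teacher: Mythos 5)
Your proposal is correct and follows essentially the same route as the paper: both reduce the statement to the product property of Theorem~\ref{regla del producto} applied to $\varLambda(l,A)=\varLambda(l_1,B_j)\cdot\varLambda(l_2,F)$, and both identify the inner fiberwise integral with $\varLambda(l_2,F)\cdot\mathds{1}_{B_j}$ via $\varLambda(\mathrm{id}\times l_2,\{y\}\times F)=\varLambda(\mathrm{id},\{y\})\cdot\varLambda(l_2,F)$. The only difference is cosmetic --- you evaluate the two sides separately and compare, while the paper runs a single chain of equalities from left to right --- and your explicit handling of the fibers over $y\notin B_j$ makes precise a point the paper leaves implicit.
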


\begin{remark}
In~\eqref{int_X h d varLambda l}, $\mathrm{id}\times l_2$ refers to the map that it induces in every fiber $p^{-1}(y)$.
\end{remark}

\begin{proof}
We have 
\begin{align*}
    \int_X h\,d\varLambda l&= \varLambda(l, A)=\varLambda(l_1,B_j)\cdot\varLambda(l_2,F)\\&=\varLambda(l_2, F)\int_B \mathds{1}_{B_j}d\varLambda l_1=\int_B \varLambda(l_2, F)\cdot 1\cdot \mathds{1}_{B_j}d\varLambda l_1\\
    &=\int_B\varLambda(l_2,F)\cdot\varLambda(\mathrm{id},y)\mathds{1}_{B_j}d\varLambda l_1=\int_B \varLambda(\mathrm{id}\times l_2, \{y\}\times F)\mathds{1}_{B_j} d\varLambda l_1\\
    &=\int_B \varLambda(\mathrm{id}\times l_2, p^{-1}(y))\mathds{1}_{B_j}d\varLambda l_1
    =\int_B\bigg(\int_{p^{-1}(y)}h d\varLambda(\mathrm{id}\times l_2)\bigg)\mathds{1}_{B_j}d\varLambda l_1\\
    &=\int_B\bigg(\int_{p^{-1}(y)}hd\varLambda(\mathrm{id}\times l_2)\bigg)d\varLambda l_1\;.\qedhere
\end{align*}
\end{proof}
\begin{remark}\label{observacion familia conjuntos}
Based on Lemma~\ref{lema 3.6}, let us now formulate and prove a Fubini theorem. Before, note that, since $B$ is compact (it is a finite complex), it admits a finite covering $\{B_j\}_{j=1}^m$ with the $B_j$ of the form of Lemma~\ref{lema del entorno}, which, in turn, induces a covering of $X$ formed by $\{p^{-1}(B_j)\}_{j=1}^m$. Thus, we can apply our Fubini theorem to applications that locally are of the form $l|_{p^{-1}(B_j)}\equiv \mathrm{id}\times l_2$.
\end{remark}
\begin{theorem}[Fubini Theorem for Lefschetz integration]\label{thm:fiber_bundles}
   Let $(X,p,B)$ be a fiber bundle such that $X$, $B$ and $F$ finite simplicial complexes, and $p$ is definable. Let $\{B_j\}_{j=1}^m$ be the covering as in Remark~\ref{observacion familia conjuntos}. Let $l:X\rightarrow X$ be a homeomorfism such that $l|_{\overline{p^{-1}(B_j)}}\equiv\mathrm{id}\times l_2$ and $l|_{p^{-1}(B_j)}\equiv \mathrm{id}\times l_2$, with $l_2:F\overset{\approx}{\rightarrow}F$. Then
    \begin{equation*}
        \int_X h\,d\varLambda l = \int_B\bigg(\int_{p^{-1}(y)}h\,d\varLambda l\bigg)d\chi\;.
    \end{equation*}
\end{theorem}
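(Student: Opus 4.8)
The plan is to reduce the global statement to the local one established in Lemma~\ref{lema 3.6} by means of a partition-of-unity-type argument at the level of indicator functions, exactly as the additivity of the Lefschetz integral permits. First I would triangulate $B$ compatibly with the covering $\{B_j\}_{j=1}^m$ and pass to the associated partition into definable pieces: writing $B$ as a disjoint union of definable sets $C_k$, each contained in some $B_{j(k)}$, so that $\mathds{1}_{B}=\sum_k \mathds{1}_{C_k}$ and, pulling back, $\mathds{1}_X=\sum_k \mathds{1}_{p^{-1}(C_k)}$. Since $p$ is definable and $l$ locally splits as $\mathrm{id}\times l_2$ over each $p^{-1}(B_j)$, each $p^{-1}(C_k)$ is $l$-invariant (after refining the triangulation so that the $C_k$ and their closures are $l_1$-invariant — here $l_1=\mathrm{id}$ on the base, so this is automatic) and the hypotheses of Lemma~\ref{lema 3.6} are met for $A=p^{-1}(C_k)$ with $l_1=\mathrm{id}$.

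Next I would expand an arbitrary $f$-integrable $h$ on $X$ using these pieces. By Definition~\ref{def integral} and Theorem~\ref{thm:well_defined_integration}, $\int_X h\,d\varLambda l$ is additive over the disjoint decomposition $h=\sum_k h\cdot\mathds{1}_{p^{-1}(C_k)}$, so it suffices to treat each summand. For a summand supported in $p^{-1}(C_k)$, I would first handle $h=\mathds{1}_{p^{-1}(C_k)}$ via Lemma~\ref{lema 3.6}, which (with $l_1=\mathrm{id}$, hence $d\varLambda l_1=d\chi$ by the remark that the combinatorial Lefschetz number of the identity is the combinatorial Euler characteristic) gives
\begin{equation*}
\int_X \mathds{1}_{p^{-1}(C_k)}\,d\varLambda l=\int_B\bigg(\int_{p^{-1}(y)}\mathds{1}_{p^{-1}(C_k)}\,d\varLambda(\mathrm{id}\times l_2)\bigg)d\chi\;,
\end{equation*}
and then extend to general $h$ supported over $C_k$ by linearity in the coefficients: if $h|_{p^{-1}(C_k)}=\sum_i c_i\mathds{1}_{U_i}$ with $U_i\subset p^{-1}(C_k)$ definable and $l$-invariant, each $U_i$ again splits over $C_k$ (using the product structure and the definable triangulation), so each obeys the identity and we sum. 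Finally, summing over $k$ and observing that the inner integrals $\int_{p^{-1}(y)}h\,d\varLambda(\mathrm{id}\times l_2)$ assemble, as $y$ ranges over $B$, into a single $\chi$-integrable function on $B$ whose restriction to each $C_k$ is the one produced above — and that $\mathrm{id}\times l_2$ restricted to the fiber $p^{-1}(y)$ is exactly the map called $l$ in the statement — we obtain
\begin{equation*}
\int_X h\,d\varLambda l=\sum_k\int_B\bigg(\int_{p^{-1}(y)}h\cdot\mathds{1}_{p^{-1}(C_k)}\,d\varLambda l\bigg)d\chi=\int_B\bigg(\int_{p^{-1}(y)}h\,d\varLambda l\bigg)d\chi\;.
\end{equation*}

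The main obstacle I anticipate is the bookkeeping of the overlaps in the covering: the $B_j$ need not be disjoint, so one must genuinely pass to a compatible definable triangulation and to a disjoint refinement $\{C_k\}$, checking that on each refined piece the splitting $l\equiv\mathrm{id}\times l_2$ and the product-of-closures property $\overline{p^{-1}(C_k)}\approx\overline{C_k}\times F$ still hold — this is where Lemma~\ref{lema del entorno} and the definable triangulation theorem do the work, but one must be careful that $l_2$ is the \emph{same} fiber map across pieces lying in different $B_j$ where the local trivializations differ (this is exactly the content of the hypothesis $l|_{\overline{p^{-1}(B_j)}}\equiv\mathrm{id}\times l_2$ being stated with a single $l_2$, so it should go through). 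A secondary, more routine point is verifying that the function $y\mapsto\int_{p^{-1}(y)}h\,d\varLambda l$ is itself $\chi$-integrable on $B$, i.e.\ a finite $\mathbb{Z}$-linear combination of indicators of definable subsets of $B$; this follows because it is locally constant on the pieces of a definable triangulation of $B$ refining the $C_k$, by the local product structure and the definability of $p$ and $h$.
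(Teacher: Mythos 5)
Your proposal is correct and follows the same overall strategy as the paper: localize over the base via the trivializing cover, apply the local Lemma~\ref{lema 3.6} (with $l_1=\mathrm{id}$, so that $d\varLambda\,\mathrm{id}=d\chi$), and reassemble by additivity of the Lefschetz integral. The one genuine difference is the decomposition device: you pass to a \emph{disjoint} definable partition $\{C_k\}$ of $B$ refining $\{B_j\}_{j=1}^m$ via a compatible triangulation, whereas the paper keeps the overlapping cover and handles the overlaps by inclusion--exclusion, expanding $\mathds{1}_{U_\alpha}$ into the alternating sum $\sum_j\mathds{1}_{U_\alpha\cap p^{-1}(B_j)}-\sum_{j,k}\mathds{1}_{U_\alpha\cap p^{-1}(B_j)\cap p^{-1}(B_k)}+\cdots$ before applying the local lemma term by term. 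Your route buys cleaner bookkeeping (no alternating sums) at the cost of having to re-verify the hypotheses of Lemma~\ref{lema 3.6} on the refined pieces, in particular $\overline{p^{-1}(C_k)}\approx\overline{C_k}\times F$; this does go through since $\overline{C_k}\subset\overline{B_{j(k)}}$ lies in the trivializing neighborhood furnished by Lemma~\ref{lema del entorno}. The paper's route avoids that verification for the $C_k$ but must instead check it for the intersections $B_{j_1}\cap\cdots\cap B_{j_r}$, which is the same kind of observation. Note that both arguments ultimately invoke Lemma~\ref{lema 3.6} for indicators of $l$-invariant definable subsets of a trivializing preimage that are \emph{not} full preimages of subsets of the base (the paper for $U_\alpha\cap p^{-1}(B_j)$, you for the $U_i\subset p^{-1}(C_k)$), which is a mild extension of the lemma as literally stated; you flag this explicitly, the paper leaves it implicit.
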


\begin{proof}
    Because $l$-integrable, $h=\sum_{\alpha\in I}c_\alpha\mathds{1}_{U_\alpha}$, where $U_\alpha$ is $l$-invariant and definable. Then
    \begin{align*}
        \int_X h\,d\varLambda l&= \sum_{\alpha\in I} c_\alpha\int_X \mathds{1}_{U_\alpha}\,d\varLambda l \\
        &=\sum_{\alpha\in I}c_\alpha\int_X\bigg[\sum_{j=1}^m\mathds{1}_{U_\alpha\cap p^{-1}(B_j)}-\sum_{j,k=1}^m\mathds{1}_{U_\alpha\cap p^{-1}(B_j)\cap p^{-1}(B_k)}\ldots\bigg]d\varLambda l\\
        &=\sum_{\alpha\in I}c_\alpha\bigg[\sum_{j=1}^m \int_X \mathds{1}_{U_\alpha\cap p^{-1}(B_j)}d\varLambda l -\sum_{j,k=1}^m\int_X\mathds{1}_{U_\alpha\cap p^{-1}(B_j)\cap p^{-1}(B_k)}d\varLambda l\ldots\bigg]\;.
    \end{align*}
Now, by Lemma~\ref{lema 3.6}, the last term can be written as
\begin{multline*}
\sum_{\alpha\in I}c_\alpha\bigg[\sum_{j=1}^m\int_B\bigg(\int_{p^{-1}(y)}\mathds{1}_{U_\alpha\cap p^{-1}(B_j)}\,d\varLambda l\bigg)d\chi\\
\begin{aligned}
&{}- \sum_{j,k=1}^m \int_B\bigg(\int_{p^{-1}(y)}\mathds{1}_{U_\alpha\cap p^{-1}(B_j)\cap p^{-1}(B_k)}\,d\varLambda l\bigg)d\chi\ldots\bigg]\\
&=\sum_{\alpha\in I}c_\alpha \int_B\bigg(\int_{p^{-1}(y)}\mathds{1}_{U_\alpha}\,d\varLambda l\bigg)d\chi=\int_B\bigg(\int_{p^{-1}(y)}\sum_{\alpha\in \alpha}c_\alpha\mathds{1}_{U_\alpha}\,d\varLambda l\bigg)d\chi\\
&=\int_B\bigg(\int_{p^{-1}(y)}h\,d\varLambda l\bigg)d\chi\;.\qedhere
\end{aligned}
\end{multline*}
\end{proof}


\begin{remark}
    Theorem \ref{thm:fiber_bundles} holds in the smooth setting as well. First, observe that for a smooth fiber bundle, it is possible to triangulate it (see \cite{Putz}). Then, the result follows from our Theorem \ref{thm:fiber_bundles}. 
\end{remark}

\section{Application to counting methods}

In \cite{Curry}, the application of the combinatorial Euler characteristic to counting methods using sensors was already studied. We will generalize part of his results in the same way that the Lefschetz number generalizes to the Euler characteristic.

Let us imagine that we have sensors on the floor of a building so that we can assume that there is a sensor at every point of the floor. Suppose that every person (denoted by $\alpha$) in the building activates the sensors in a neighborhood. In this way, we can define a \textit{counting function} that, for each sensor, tells us the number of people it detects. In this case, we have the following result.

\begin{theorem}[Counting Theorem]\label{thm:counting}
    Let $h:X\rightarrow\mathbb{N}$ be a counting function. If there exists an homeomorphism $f:X\rightarrow X$ such that $h:\sum_{\alpha\in I}\mathds{1}_{U_\alpha}$, where every $U_\alpha$ is definable and $f$-invariant, and with $\varLambda(f,U_\alpha)_X=N\neq 0$, then
    \begin{equation*}
        |I|=\frac{1}{N}\int_X h\,d\varLambda f\;.
    \end{equation*}
\end{theorem}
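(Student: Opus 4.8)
The plan is to reduce the statement to the additivity and normalization properties of the Lefschetz integral established in Theorem~\ref{thm:well_defined_integration} together with Definition~\ref{def integral}. First I would expand $\int_X h\,d\varLambda f$ using the linearity built into Definition~\ref{def integral}: since $h=\sum_{\alpha\in I}\mathds{1}_{U_\alpha}$ with each $U_\alpha$ definable and $f$-invariant (so that $h$ is $f$-integrable), we get
\begin{equation*}
\int_X h\,d\varLambda f=\sum_{\alpha\in I}\varLambda(f,U_\alpha)_X\;.
\end{equation*}
Then, by the hypothesis $\varLambda(f,U_\alpha)_X=N$ for every $\alpha\in I$, this sum is simply $|I|\cdot N$, and dividing by $N\neq 0$ yields the claimed formula $|I|=\frac{1}{N}\int_X h\,d\varLambda f$. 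This is the entire skeleton; the argument is essentially a one-line computation once the machinery of the preceding sections is in place.

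The only point requiring a little care is making sure $h=\sum_{\alpha\in I}\mathds{1}_{U_\alpha}$ genuinely qualifies as an $f$-integrable function in the sense of Definition~\ref{def integral} — i.e.\ that this is a legitimate finite expression with all sets in the prescribed algebra of definable, $f$-invariant sets. Since $I$ is finite (a finite set of targets) and each $U_\alpha$ is assumed definable and $f$-invariant by hypothesis, this is immediate, with all coefficients equal to $1$. I would state this explicitly so that the appeal to Definition~\ref{def integral} is unambiguous.

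I do not anticipate a substantive obstacle: the theorem is a direct corollary of the well-definedness and additivity of the integral. If anything, the main subtlety is purely expository — one should remark that, unlike the Euler-characteristic version in Equation~\eqref{formula_counting_Euler}, here the supports need not be contractible nor even share a common Euler characteristic; it suffices that the \emph{combinatorial Lefschetz numbers} $\varLambda(f,U_\alpha)_X$ agree (and are nonzero) for a suitable $f$, which is a strictly weaker and more flexible requirement. I would close by pointing to Example~\ref{ex:lefschetz_mellor_que_euler} to show the hypotheses can be met in situations where the Euler-characteristic approach fails.
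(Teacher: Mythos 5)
Your proposal is correct and coincides with the paper's own proof: both expand $\int_X h\,d\varLambda f$ by the linearity of Definition~\ref{def integral} to obtain $\sum_{\alpha\in I}\varLambda(f,U_\alpha)_X=|I|\cdot N$ and divide by $N\neq 0$. Your extra remark on the finiteness of $I$ and the membership of the $U_\alpha$ in the prescribed algebra is a reasonable clarification but does not change the argument.
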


\begin{proof}
	The proof is similar to \cite[Theorem~14.1]{Curry}.
 \begin{equation*}
     \int_X hd\varLambda f=\sum_{\alpha\in I} \varLambda(f,U_\alpha)_X=|I|\cdot N.
 \end{equation*}
\end{proof}

\begin{remark}
The above theorem is not a crude generalization of the result presented in \cite{Curry}. Indeed, we can find simple examples in which the identity map does not allow us to count the number of people but other homeomorphisms, such as a reflection, do. In the following example, we present one of these cases.
\end{remark}

\begin{example}\label{ex:lefschetz_mellor_que_euler}
 Let us consider the simplicial complex shown in Figure~\ref{figure complex}.
     \begin{figure}[htb]
    \centering
     \includegraphics[scale=0.6]{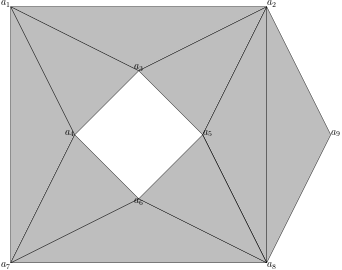} 
     \caption{Simplicial complex $X$.}
     \label{figure complex}
\end{figure}
Let $h:X\rightarrow\mathbb{Z}$ be the function that has the value $2$ at the closed simplex $[A2,A5,A8]$, and the value $1$ at the rest of the complex. This function can be written as the sum of the characteristic function of the subcomplex generated by the vertices $\{A1, A2, A3, A4, A5, A6, A7, A8\}$ and the characteristic function of the subcomplex generated by the vertices $\{A2, A5, A8, A9\}$. Both subcomplexes are invariant by the reflection $f$ with respect to the axis containing the vertices $A4$, $A5$ and $A9$, but their Euler characteristics are different.
\end{example}

The applications of this counting method are diverse. Like systems that allow us to count the number of passengers in stations, shopping centers or airports, or mechanisms that indicate the number of fish in a fish farm.

 \end{document}